\newcommand{\ol}{\overline}
\newcommand{\sr}[1]{{\cal #1}}
\newcommand{\dd}[1]{\mathbb{#1}}
\newcommand{\eq}[1]{(\ref{eq:#1})}
\newcommand{\eqn}[1]{(\ref{eqn:#1})}
\newcommand{\lem}[1]{Lemma~\ref{lem:#1}}
\newcommand{\pro}[1]{Proposition~\ref{pro:#1}}
\newcommand{\rem}[1]{Remark~\ref{rem:#1}}
\newcommand{\app}[1]{Appendix~\ref{app:#1}}
\newcommand{\sectn}[1]{Section~\ref{sec:#1}}
\newcommand{\pend}{\hfill \thicklines \framebox(6.6,6.6)[l]{}}
\newenvironment{proof}{\noindent {\sc  Proof.} \rm}{\pend}
\newenvironment{proof*}[1]{\noindent {\sc  #1} \rm}{\pend}
\newtheorem{theorem}{Theorem}[section]
\newtheorem{lemma}{Lemma}[section]
\newtheorem{proposition}{Proposition}[section]
\newtheorem{remark}{Remark}[section]
\newtheorem{example}{Example}[section]
\newtheorem{corollary}{Corollary}[section]
\newcommand{\setsection}[2] {
\setcounter{section}{#1}
\setcounter{subsection}{0}
\setcounter{equation}{0}
\setcounter{conjecture}{0}
\setcounter{assumption}{0}
\setcounter{question}{0}
\setcounter{definition}{0}
\setcounter{theorem}{0}
\setcounter{corollary}{0}
\setcounter{lemma}{0}
\setcounter{proposition}{0}
\setcounter{remark}{0}
\setcounter{appen}{0}
\setsection*{\large \bf \thesection. #2}}
\newcommand{\setnewcounter} {
\setcounter{subsection}{0}
\setcounter{equation}{0}
\setcounter{conjecture}{0}
\setcounter{assumption}{0}
\setcounter{question}{0}
\setcounter{definition}{0}
\setcounter{theorem}{0}
\setcounter{corollary}{0}
\setcounter{lemma}{0}
\setcounter{proposition}{0}
\setcounter{remark}{0}
}
\begin{document}
\title{\bf \Large Heavy-tail asymptotics for the length of a busy period in a Generalised Jackson Network}

\author{Sergey Foss \footnote{Email: \href{mailto: S.Foss@hw.ac.uk}{s.foss@hw.ac.uk}}\\Heriot-Watt University and \\  Sobolev Institute of Mathematics
\\ \and Masakiyo Miyazawa \footnote{Email: \href{miyazawa@rs.tus.ac.jp}{miyazawa@rs.tus.ac.jp}}\\ Tokyo University of Science\\ \and Linglong Yuan \footnote{Email: linglong.yuan@liverpool.ac.uk. Research of L.Y. is supported by
 the Heilbronn Institute for Mathematical Research (HIMR) and the UKRI/EPSRC
Additional Funding Programme for Mathematical Sciences.} \\
University of Liverpool}
\date{\today}

\maketitle
\begin{abstract}

We consider a Generalised Jackson Network  with finitely many servers, a renewal\ input and $i.i.d.$\ service times at each queue. We assume the network to be stable and, in addition, the distribution of the inter-arrival times to have unbounded support. This implies that the length of a typical busy period $B$, which is the time between two successive idle periods, is finite a.s. and has a finite mean.

We assume that the distributions of the service times with the heaviest tails belong to the class of so-called intermediate regularly varying distributions. 
We obtain the exact asymptotics for the probability ${\dd P} (B>x)$, as $x\to\infty$. For that, we show that the Principle of a Single Big Jump holds: $B$ takes a large value mainly due to a single unusually large service time. 
\end{abstract}
  
\begin{quotation}
\noindent {\bf Keywords:} Generalised Jackson Network, Busy Period, Tail Asymptotics, (Intermediate) Regularly Varying Distribution, Principle of a Single Big Jump, 
Fluid Network

\vspace {1mm}
\noindent\textit{MSC (2020): } 60K25, 60F10, 60K20
\end{quotation}
\newpage
\tableofcontents

\section{Introduction}
\label{sec:introduction}

Uma Prabhu was one of the founders of the modern queueing theory and the founder of the ``Queueing systems: theory and applications'' journal. He made significant contributions to the performance and asymptotic analysis of many important characteristics of queueing systems and, in particular, of the length of a busy period, see e.g. Sections 1.1.4, 1.3.3, 3.6.5 and 3.7.3 in his monograph \cite{Prab}.

We consider tail asymptotics problems of certain characteristics of queueing networks. 
In the classical light-tail scenario (where the distributions of service times possess finite exponential moments), 
the tail asymptotics have been studied in queueing networks 
and related reflecting processes for many years
(see e.g. \cite{DaiMiya2011,Miya2011} and references therein). 
In the case of heavy-tailed distributions of service times, substantial results have been obtained mostly for random processes on the real line, including queueing systems, regenerative processes, Levy processes etc. 
Namely, tail asymptotics have been found for the distributions of the waiting or sojourn 
in single- and multi-server queues and then for feedforward networks of such queues (see e.g. \cite{FKZ, ZwarBorsMand2004, LiesMand2008, Kors2018} and the lists of references therein)  
and also for the so-called maximal dater in the Generalised Jackson Networks (GJNs, in short) (see e.g.\ \cite{Lela2005a}, \cite{BaccFossLela2005} ). 
Further, in the stable single-server queue, the distributional tail asymptotics have been found for the length of a typical busy period, but only in two cases, 
where either the service time distribution is intermediate regularly varying (see e.g. \cite{Zwar2001, FossMiya2018, AsmuFoss2018}) or the so-called ``square-root insensitive'' (see e.g. \cite{jelenkovic2004large, JeleMomcZwar2004})  -- in the terminology of \cite{jelenkovic2004large}, but we are unaware of any results in the case where, say, the tail distribution of the service time is not square-root insensitive  and, for example, for $\overline{F}(x) = \exp (-x^{\beta})$ with $\beta \in (1/2, 1)$.
The main difficulty in analysing the busy period is that its length may decrease with an increase of inter-arrival  times, which ruins the corresponding monotonicity property.
The complexity of the tail asymptotics in the ``non-square-root insensitivity" domain has been shown in the papers \cite{AsmuKlupSigm1998} and \cite{FossKors2000}.

We mention also 
a number of papers on feedforward networks with heavy-tailed service time distributions that include fluid queues with jump inputs, L\'{e}vy-driven queues, parallel queues, coupled queues etc. (see, e.g., \cite{LiesMand2008,ZwarBorsMand2004} and references therein). 
In \cite{FossMiya2018}, the tail asymptotics for the sojourn time of a ``typical'' customer with a single-server queue with feedback has been analysed.

In this paper, we consider a more general stochastic queueing model where the characteristic of interest,
the length of a busy period, is ``non-monotonic'' on the inter-arrival times of customers. Namely, we deal with the so-called (open) Generalised Jackson Network   with a finite number of single-server stations. In what follows, it is convenient to us to identify the terms ``server $k$'' and ``station $k$'' and use both of them, whichever is more appropriate in a particular case. We assume that 
the exogenous input forms a renewal process where each arriving customer is directed randomly (according to predefined probabilities) to a server, and that the service times at each server are independent and identically distributed (but their distributions may differ for different queues). In front of each server, customers form a queue and are served in the order of their arrivals. After service completion, the served customer either randomly chooses one of the servers for the next service or leaves the network. It is known that if at each queue  the total arrival 
rate of customers is lower than the service rate, then the network is  ``stable'' in the sense that the distributions of its queue lengths and of corresponding workloads converge (either weakly or in the total variation norm -- depending on extra technical assumptions) to the unique limiting/stationary distribution. If, in addition, the distribution of the inter-arrival times has unbounded support, the whole network empties regularly almost surely, so one can talk about a ``typical'' busy period, which is the time between two consecutive idle periods. We are interested in the analysis of the
length of the first busy period (call it $B$) and, namely, in finding the asymptotic expression for ${\dd P} (B>x)$, as $x$ grows to infinity. With a slight abuse of notation, the period is also referred to as the period length; so we use $B$ for the first busy period, and also for its length, depending on the context.

We assume that the service time distributions with the ``heaviest'' tails belong to the class ${{\sr{IRV}}}$ of so-called intermediate regularly varying distributions. Recall that a distribution function $F\in {{\sr{IRV}}}$ if
\begin{align}\label{eq:IRV0}
\lim_{y\downarrow 1}
\liminf_{x\to\infty} \frac{\overline{F}(yx)}{\overline{F}(x)}=1
\end{align}
where $\overline{F}(x) = 1-F(x)$ is the tail distribution function. 
It is known (but, probably, not well-known) that a distribution function $F$ 
is ${{\sr{IRV}}}$ if and only if
\begin{align}\label{IRV-1}
\frac{\overline{F}(x+h(x))}{\overline{F}(x)} \to 1 \ \ \text{as} \ \ x\to\infty,
\end{align}
for any function $h(x)$ which is of order $o(x)$ as $x\to\infty$
(this means that $\lim_{x\to\infty} h(x)/x = 0$),
and if and only if
\begin{align}\label{IRV-2}
\frac{\overline{F}(S_n)}{\overline{F}(n\mathbb EX_1)
} \to 1 \ \ \text{a.s. as} \ \ n\to\infty, 
\end{align}
for the partial sums $S_n=\sum_{i=1}^nX_i$ of {\it any} $i.i.d.$\ sequence $\{X_n\}$ of random variables  with a strictly positive finite mean, see Theorems 2.47 and 2.48 in \cite{FKZ}. 
Many results in this paper are based on these characteristic properties \eqref{IRV-1} and \eqref{IRV-2}.

The paper is organised as follows. 
In Section \ref{sec:main}, we introduce the model  
and formulate our main result Theorem \ref{th1}. Then in Section \ref{sec:auxiliary}  
{we present an alternative coupling representation of the network}, 
formulate a number of monotonicity and invariance properties, introduce several auxiliary models,  and present the scheme of the proof of the main result Theorem \ref{th1}.
In Section \ref{sec:ulbound}, we propose upper and lower bounds for the length of the first busy period. Then, in Section \ref{sec:PSBJ}, we establish the principle of a single big jump, first for the upper bound and then for the busy period length $B$. Then we obtain the desired asymptotics in Section \ref{sec:finduk} and illustrate the result by an example of a two-station network.
Finally, Appendices help to keep the paper self-contained:
{in Appendix A, we formulate basic properties of some classes of heavy-tailed distributions; in Appendix B, we formulate a result on the tail asymptotics for random sums; Appendix C includes the tail-asymptotics result for the busy period in a single-server queue; Appendix D provides some simple bounds on the times to empty the GIN; and Appendix E summarises known results on fluid limits in GJNs.   }

{\bf Notation.} In what follows, we say that two (strictly) positive functions $f(x)$ and $g(x)$ are {\it asymptotically equivalent}
(at infinity) and write 
\begin{align}\label{equi}
f(x)\sim g(x) 
\end{align}  
if $\lim_{x\to\infty} f(x)/g(x) =1$.
Next, $f(x)$ {\it asymptotically dominates $g(x)$}, 
\begin{align*}
f(x) \gtrsim g(x)
\end{align*}
if $\liminf_{x\to\infty} f(x)/g(x)\geq 1.$ Further, recall the standard notation: 
\begin{align*}
f(x) = O(g(x))
\end{align*}
if $\limsup_{x\to\infty} f(x)/g(x) < \infty$, and
\begin{align*}
f(x) = o(g(x))
\end{align*}
if $\lim_{x\to\infty} f(x)/g(x) = 0.$

We say that two families of events, $\{ A(x) \}$ and $\{B(x)\}$, of positive probabilities are asymptotically equivalent and write 
\begin{align}\label{setequi}
A(x) \simeq B(x)
\end{align} 
if ${\mathbb P} (A(x)\setminus B(x)) + {\mathbb P} (B(x) \setminus A(x)) = o({\mathbb P} (A(x))$ as $x\to\infty$. 
Clearly, \eqref{setequi} implies that ${\mathbb P} (A(x)) \sim {\mathbb P} (B(x))$ in the sense of \eqref{equi}, but not the other way around. 

We write  
\begin{equation}
A(x)\stackrel{\sim}{\subset}B(x)
\end{equation}
if $\mathbb P(A(x)\setminus B(x))=o(\mathbb P(A(x))$.

We use the same notation, say $F$, for the distribution of a random variable and for its distribution function, and then $\overline{F}(x) =1 - F(x)$ is the tail distribution function.

For two positive random variables $\alpha(x)$ and $ \beta(x)$, we use notation $\alpha(x)\approx_p \beta(x)$ if there exist two positive constants $0<c<C$ such that 
\begin{align*}
\mathbb P(c\beta(x)\leq \alpha(x)\leq C\beta(x))\xrightarrow[x\to\infty]{}1,
\end{align*}
and we write $\alpha(x) \sim_p \beta(x)$ if 
$\alpha(x)/\beta(x) \to 1$ in probability, as $x\to\infty$.

We write $\alpha(x)=O_p(\beta(x))$ if $\lim_{x\to\infty}\mathbb P(\alpha(x)\leq C\beta(x))=1$ for some $C>0,$ and write $\alpha(x)=o_p(\beta(x))$ if $\alpha(x)/\beta(x)$ converges in probability to $0$ as $x\to\infty.$

Assume that an event $A=A(x)$ depends on parameter $x\ge 0$. 
We say that $A$ occurs {\it with high probability}, or {\it w.h.p.} in short, if ${\mathbb P}(A) \to 1$ as $x\to\infty$. 

We write for short $[n]:=\{1,2,\ldots,n\}$, and this notation is typically used for $n=K$ and $n=K+1$.

\section{Description of the model and the main result}
\label{sec:main}
\setnewcounter

In this section, we introduce a Generalised Jackson Network, formulate our main result Theorem \ref{th1}. 
Recall that 
the meaning of ``generalised'' is that both inter-arrival and service times are not required to have exponential distributions.

We consider an open queueing network with $K$ single-server stations.   Customers $n=1,2,\ldots$ arrive at time instants $0\le T_1\le T_2 \le T_n \le \ldots$. Setting $T_0=0$, we assume that inter-arrival times $t_n=T_{n}-T_{n-1}, n\ge 1$ form an $i.i.d.$\ sequence with finite positive mean $a={\mathbb E} t_1$ and that the first customer arrives in an empty network. Any arriving customer $n$ is directed to the server $\alpha_{n,0}\in[K]$ and joins the queue there (or takes service immediately if the server is idle). We assume that all $\alpha_{n,0}$'s are $i.i.d.$ such that $\mathbb P(\alpha_{n,0}=k)=p_{0,k}$ where $
p_{0,k}\ge 0$ and $\sum_{k=1}^K p_{0,k}=1$. 
At each station, customers are served one-by-one in the order of their arrivals {\it to that station}, and we denote by 
$\sigma_{k,i}$ the duration of the $i$'th consecutive service at station $k$ (this means that here we associate service times with stations rather than with customers). We assume that, for any $k$, random variables $\{\sigma_{k,i}\}$  are $i.i.d.$\ with finite mean $b_k$ and distribution function
$F_k$. We use $\alpha_{k,i}\in[K+1]$ to denote the next server after completing the $i$-th consecutive service at station $k$ where $\alpha_{k,i}=K+1$ means it leaves the network. We require that $\mathbb P(\alpha_{k,i}=\ell)=p_{k,\ell}$ where $p_{k,\ell}\ge 0$ and $\sum_{\ell=1}^{K+1} p_{k,\ell}=1$. We assume that all $t_n$'s, $\alpha_{n,0}$'s, $\sigma_{k,i}$'s and $\alpha_{k,i}$'s are mutually independent. 

The inter-arrival times $\{t_n\}$ and the collection of random variables
\begin{align*}
\Sigma_S:=\Big\{\{\sigma_{k,i}\}_{i\geq 1}, \ \ 1\leq k\leq K ;\quad \{\alpha_{k,i}\}_{i\geq 1}, \ \ 0\leq k\leq K\Big\},
\end{align*}
completely determine the dynamics of the network. In what follows, we call $\Sigma_S$ 
the \textit{server-associated network infrastructure.} This infrastructure provides a coupling construction, 
that will be used for various arrival times.

In the model described above, we consider the case where every customer eventually leaves the network with probability 1. 
This means that $K+1$ is the only absorbing state for  the {\it auxiliary Markov chain} $\{Y_j\}$ with initial state $Y_0=0$, that lives in the  state space
$\{0,1,2,\ldots,K,K+1\}$ and has transition probabilities $\{p_{k,\ell}\}$ (we assume in
addition that $p_{K+1,K+1}=1$). 
Let $N_k$ be the number of times the Markov chain $\{Y_j\}$  visits state $k$, i.e. $N_k = \sum_{j=1}^{\infty}
{\bf 1} (Y_j=k)$, and let $\widehat N= \sum_{k=1}^K N_k$ be their total. In order to avoid trivialities, we may assume that
$\dd{P}(N_k\ge 1)>0$, for all $k \in [K]$. By the basic theory of finite Markov chains, $\widehat N$ has a  finite exponential moment,
\begin{align}\label{eq:exp}
\dd{E} e^{c\widehat N}<\infty, \quad \mbox{for some} \quad c>0.
\end{align}

Return to the Generalised Jackson Network. Let $Q_{n,k}$ be the queue length at the station $k \in [K]$ that is observed by customer $n$ at the time $T_n$ of its arrival, and $R_{n,k}$ the residual service time of the customer in service if there is any (and we let $R_{n,k}=0$ if the server is idle). The sequence $Z_n = ((Q_{n,k}, R_{n,k}), k \in [K])$ forms a time-homogeneous Markov chain that describes the dynamics of the network. 

In the proposition below, we summarise some known stability results (see, e.g., \cite{Foss1991} or \cite{BaccFoss1994} or \cite{DownMeyn1994} or \cite{dai1995positive}). We present these results in the generality needed for our purposes.  

\begin{proposition}
\label{pro:Prop1}
Assume that the following two conditions hold: 
\begin{align}
\label{eq:stab}
a> \max_{1\le k \le K} b_k \dd{E} N_k,
\end{align}
and the distribution of the inter-arrival times has unbounded support, 
\begin{align}
\label{eq:unbound}
{\mathbb P} (t_1>x)> 0, \ \ \text{for all} \ \ x>0,
\end{align}
\begin{itemize}
\item[(i)]
Then the GJN is {\it stable}, in the sense that the underlying Markov chain $\{Z_n\}$ admits a unique stationary distribution, say $\pi$, and that the distributions of $Z_n$ converge to $\pi$ in the total variation norm, for any initial value $Z_0$.
\item[(ii)]
Further, the Markov chain possesses a positive recurrent atom ${\bf 0}$ with all coordinates equal to zero. This means that the state ${\bf 0}$ is achievable from any other state of the Markov chain and that the mean return time to state ${\bf 0}$ is finite. 
\end{itemize}
\end{proposition}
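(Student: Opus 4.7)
The plan is to combine the fluid-limit criterion of \cite{dai1995positive} (summarised in Appendix E) with a direct coupling-type argument exploiting the unbounded-support hypothesis \eqref{eq:unbound}; the two hypotheses \eqref{eq:stab} and \eqref{eq:unbound} play complementary roles in the two parts.

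For part (i), under \eqref{eq:stab} the fluid model associated with the GJN is stable, that is, all fluid levels drain to zero in a bounded deterministic time from any unit-mass initial configuration. By Dai's fluid-limit Lyapunov criterion, this yields positive Harris recurrence of the Markov chain $\{Z_n\}$, hence the existence of a unique invariant probability $\pi$. To upgrade weak convergence to convergence in the total-variation norm, I would invoke \eqref{eq:unbound}: the renewal input structure together with the unbounded support of $t_1$ produces a spread-out minorisation on a small set containing ${\bf 0}$, and combined with aperiodicity this gives $\|{\mathbb P}(Z_n\in\cdot)-\pi\|_{\mathrm{TV}}\to 0$ for every initial state $Z_0$. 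The renovating-events construction of \cite{Foss1991, BaccFoss1994} provides an alternative route to the same conclusion, producing an almost-surely finite coupling time.

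For part (ii), the goal is to show that ${\bf 0}$ is an accessible atom with finite mean return time. From any state $z=((q_k,r_k))_{k\in[K]}$, let $\tilde E(z)$ denote the time required to empty the network (including customer $n$ itself) if no further external arrivals occur; Appendix D supplies $\tilde E(z)<\infty$ almost surely, using that $\widehat N$ is a.s.\ finite by \eqref{eq:exp} and that service times are integrable. Now \eqref{eq:unbound} ensures ${\mathbb P}(t_{n+1}>s)>0$ for every $s>0$. By independence of $t_{n+1}$ from $\tilde E(z)$, one concludes
$${\mathbb P}_z(Z_{n+1}={\bf 0})\ge{\mathbb P}(t_{n+1}>\tilde E(z))>0$$
from every $z$. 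Hence ${\bf 0}$ is accessible, and combining with positive Harris recurrence from part (i), Kac's formula delivers $\pi(\{{\bf 0}\})>0$ and ${\mathbb E}_{{\bf 0}}\tau_{{\bf 0}}=1/\pi(\{{\bf 0}\})<\infty$.

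The main obstacle, in my view, is the clean separation of the roles of the two hypotheses: \eqref{eq:stab} is what makes the chain positive Harris recurrent via fluid limits, while \eqref{eq:unbound} is indispensable both for the total-variation convergence in (i) and for the accessibility of ${\bf 0}$ in (ii). Without \eqref{eq:unbound} the chain could in principle be periodic or simply fail to reach ${\bf 0}$ at all. Once these two conditions are properly combined, the remainder is standard Harris-chain machinery.
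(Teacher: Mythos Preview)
The paper does not prove Proposition~\ref{pro:Prop1} at all: it is stated explicitly as a summary of known results, with citations to \cite{Foss1991,BaccFoss1994,DownMeyn1994,dai1995positive}. Your outline therefore goes beyond what the paper offers, and it cites essentially the same sources the authors rely on.

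One technical inaccuracy is worth flagging. You write that the unbounded-support condition \eqref{eq:unbound} ``produces a spread-out minorisation on a small set containing ${\bf 0}$''. This is not correct as stated: spread-out requires an absolutely continuous component in some $n$-step transition kernel, and unbounded support of $t_1$ alone does not supply that (the inter-arrival distribution could be purely atomic, e.g.\ supported on the integers). What actually happens is simpler and does not need spread-out at all: the state ${\bf 0}$ is a genuine \emph{atom}, not merely a small set, and your own accessibility argument in part~(ii) shows that from ${\bf 0}$ the chain returns to ${\bf 0}$ in one step with positive probability (take $z={\bf 0}$, so that $\tilde E({\bf 0})$ is the total sojourn of the single arriving customer, and use ${\mathbb P}(t_2>\tilde E({\bf 0}))>0$). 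An atom with positive one-step self-return probability is automatically aperiodic, and positive Harris recurrence plus an aperiodic accessible atom yields total-variation convergence without any spread-out hypothesis. This is also consistent with the paper's own remark in Appendix~E that the spread-out condition assumed in \cite{dai1995positive} is not needed for the results used here.

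A second, minor point: in part~(ii) your displayed inequality mixes the conditioning on $Z_n=z$ with a subscript $z$ that usually denotes $Z_0=z$; and the assertion ${\mathbb P}(t_{n+1}>\tilde E(z))>0$ needs one line of justification, since $\tilde E(z)$ is random: choose $M$ with ${\mathbb P}(\tilde E(z)\le M)>0$, then use independence of $t_{n+1}$ and \eqref{eq:unbound}. With these small repairs the sketch is sound.
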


We consider the case where some of service time distributions are {\it heavy-tailed}, namely, the following assumptions are in force.
 
 {\bf Heavy-tail assumptions (HTA).} There is  a reference 
distribution function $G$ which is {\it intermediate regularly varying} (see \eq{IRV0}) 
 and such that
\begin{align}\label{eq:ma}
\dd{P}(\sigma_{k}>x) \sim c_k \overline{G}(x), \,\,x\to\infty,
\quad \mbox{where} \ c_k\ge 0 \ \mbox{and} \ c = \sum_{k=1}^K c_k > 0.
\end{align} 
Here $\sigma_k$ has the same distribution $F_k$ as all $\sigma_{k,i}$'s and the above equivalence with $c_k=0$ is interpreted as $\dd{P}(\sigma_{k}>x)=o(\overline G(x)), x\to\infty.$ 

Let $\theta_n$ be the time instant when customer $n$ leaves the network  and let 
\begin{align}\label{eq:nuB}
\nu = \inf \{ n: \max_{m\le n} \theta_m < T_{n+1} \}  
\end{align}
and 
\begin{align}\label{eq:defB}
B = \sup_{m\le \nu}\theta_{m} - T_1, \ \ \text{where} \ \ \sup_{m\le\infty}
\theta_{m} = \infty, \ \text{by convention}.
\end{align}
We call $B$ the (length of the) first busy period, and then $\nu\equiv\nu_B$ is the number of customers served within the first busy period. In general, $\nu_B$ and $B$ may take infinite values even if the stability condition \eq{stab} holds as shown in an example below.

\begin{example}
\label{exa:tandem}
Consider a simple stable tandem queue with $K=2$  stations, 
$i.i.d$. inter-arrival times $t_n, n\ge 1$ having a uniform distribution $U$ in the interval $(5/4, 7/4)$, transition probabilities $p_{0,1}=p_{1,2}=p_{2,3}=1$ and dererministic service times $\sigma_{1,j}\equiv \sigma_{2,j} \equiv 1$.   In this example, at least one customer 
is present in the network at any time and, therefore, it never empties, $\nu_B=B=\infty$ a.s.
\end{example}

However, $\nu_B$ and $B$ are $a.s.$\ 
finite and have finite expectations if we assume the condition \eq{unbound} in addition to \eq{stab},
as it follows from item (ii) of \pro{Prop1}.

Here is our main result:
\begin{theorem}\label{th1}
Under the assumptions \eq{stab}, \eq{unbound} and \eq{ma}, the following asymptotic equivalence holds: 
\begin{align}\label{eq:th1}
\dd{P}(B>x) \sim  \dd{E} \nu_B \sum_{k=1}^{K} c_k\dd{E}N_k\ol{G}(u_kx),
\end{align}
where 
 the coefficients 
$u_k$ are determined by the algorithm presented in Section \ref{sec:algo} and the terms with $c_k=0$ may be omitted.
\end{theorem}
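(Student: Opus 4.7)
The plan is to follow the standard heavy-tail asymptotics route: sandwich $B$ between tractable upper and lower bounds, show that both bounds exhibit the Principle of a Single Big Jump (PSBJ) with matching asymptotics, and identify the resulting constant via a fluid-limit analysis of the network after the big jump.

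First, I would construct an upper bound $B^+$ for $B$ that decouples the complicated feedback structure, for instance by bounding the time to empty the network after the last arrival by a sum of service times indexed by the auxiliary Markov chain $\{Y_j\}$ over the $\nu_B$ customers that arrived, plus simple renewal-type terms controlling the inter-arrival contribution. Because $\widehat N$ has the exponential moment \eqref{eq:exp} and $\nu_B$ has a finite mean (by \pro{Prop1}(ii)), the tail of $B^+$ is dominated by the service times, and the PSBJ for $B^+$ follows from tail asymptotics for random sums of intermediate regularly varying variables in the spirit of Appendix B. This step yields the natural candidate $\dd E\nu_B\sum_{k=1}^K c_k\dd EN_k\,\ol G(u_k^+ x)$ for the upper asymptotic, with constants $u_k^+\le u_k$ determined by the worst-case propagation of a big jump.

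Second, I would derive a matching lower bound by conditioning on the event that a single service time $\sigma_{k,i}=y$ is unusually large during the busy period and then tracking the subsequent network dynamics. For large $y$, on this event the network behaves according to its fluid limit (Appendix E): the big jump creates a deterministic cascade of workloads across stations, and the remaining time until complete emptying converges, after rescaling by $y$, to a deterministic quantity $1/u_k$. The constant $u_k$ is thus identified as the reciprocal of the fluid-model busy-period length started from a unit workload concentrated at station $k$, and the algorithm of \sectn{algo} computes this by iterating through the stations in the order in which they empty in the fluid model.

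The matching of the two bounds rests on the IRV characterisations \eqref{IRV-1} and \eqref{IRV-2}. Property \eqref{IRV-2} allows replacing all $i.i.d.$ sums on the fluid time scale by their means, so that the random busy-period length concentrates around $y/u_k$ with high probability given $\sigma_{k,i}=y$. Property \eqref{IRV-1} then absorbs the $o(x)$ fluctuations from the remaining service and inter-arrival times, ensuring that $\ol G(u_kx(1+o(1)))\sim\ol G(u_kx)$. Summing over $k\in[K]$ and $i\le\nu_B$ and applying Wald-type identities (the expected number of service completions at station $k$ within a busy period equals $\dd E\nu_B\cdot\dd EN_k$) yields \eqref{eq:th1}.

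The main obstacle I expect is the rigorous passage from the PSBJ for $B^+$ to the PSBJ for $B$ itself. As the introduction emphasises, the length of a busy period is non-monotone in the inter-arrival times, so neither straightforward stochastic domination nor naive coupling suffices to transfer tail estimates between $B$ and its upper bound. The argument must therefore combine a careful pathwise analysis on the fluid scale after a single identified big jump with a probabilistic control showing that any scenario producing $\{B>x\}$ without a single dominant service time contributes only $o(\ol G(x))$. This is where the sharpness of the intermediate regularly varying assumption is crucial: it is just strong enough to rule out constellations of several moderately large jumps while still being considerably weaker than the classical subexponential or regularly varying assumptions.
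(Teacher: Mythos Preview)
Your overall architecture—upper bound, PSBJ, fluid-limit identification of $u_k=1/\tau^{(k)}$, Wald identity—matches the paper, and the fluid-limit step and the final Wald computation are essentially as in Sections~\ref{sec:finduk} and~\ref{Sec3.4}.

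The gap is your upper bound. You propose $B^{+}$ as a sum of service times over the $\nu_B$ customers of the busy period and invoke the random-sum result of Appendix~B. But \lem{TARS} requires a light-tailed stopping time, and $\nu_B$ is \emph{not} light-tailed: its tail is of the same order as $\ol G$ (this is already visible in the single-server case, Proposition~\ref{prop10}). Finite mean of $\nu_B$ is far too weak, and bounding $B$ by a functional of $\nu_B$ is in any case circular since $\nu_B$ is defined through $B$. The paper's solution, which you are missing, is a two-level construction (Section~\ref{sec:ulbound}): an auxiliary single-server ``upper-bound queue'' whose \emph{extended} busy period $U=\sum_{j=1}^{J}(K+1)\widehat B_j$ dominates $B$ pathwise (Lemma~\ref{upper_bound1}), where the $\widehat B_j$ are i.i.d.\ $GI/GI/1$ busy periods and $J$ is a genuinely light-tailed stopping time. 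PSBJ for each $\widehat B_j$ comes from the known single-server result (Appendix~C), and the light tail of $J$ lets \lem{TARS} lift PSBJ to $U$. The non-monotonicity you correctly flag is exactly why a single UBQ busy period does not suffice and a random $J$ of them is needed; once $B\le U$ a.s.\ and the trivial lower bound \eq{blower} give $\dd P(B>x)\asymp\ol G(x)$, PSBJ for $B$ follows immediately from $\{B>x\}=\{B>x,U>x\}$, without the separate ``pathwise analysis on the fluid scale'' that you anticipate needing.
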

\begin{remark}
In fact, \eqref{eq:th1} is a consequence of a slightly stronger result, which shows the principle of single big jump for $B,$ see Section \ref{Sec3.4} for more detail.
\end{remark}

 \section{Auxiliary models and properties, and the scheme of the proof of Theorem \ref{th1}}
\label{sec:auxiliary}
\setnewcounter

The description of the GJN given in the previous section is based on the {\it server-associated} infrastructure $\Sigma_S$, with a complete ordering of service times at each server and of transition links from each server. In this section, we recall another  coupling construction $\Sigma_C$, then truncate them and concatenate, to obtain the earlier random variales for the GJN which is {\it customer-associated} (see \cite{BaccFoss1994}),  which {is  used in the proof of our main theorem and helps to clarify  certain sample-path properties that are recalled here too. }Then we introduce a number of auxiliary models. We complete the section with presenting the scheme of the proof of our main result. 

    \subsection{Alternative coupling representation}
In the previous section, we introduced a Markov chain $\{Y_j\}$. 
Now we extend its representation by including service/holding  times at each station and denote the extension by $\{Y_{1,j}\}$
 (with adding an extra index $1$) such that $Y_{1,0}=(0,0), Y_{1,j}\in [K]\times (0, \infty)$ for $1\leq j< \widehat N,$ with the absorbing state  $Y_{1,\widehat N}=(K+1,0).$ The new Markov chain is characterised by a collection of random variables
\begin{align}\label{wideY}
\{\alpha_{1,0}, \{\sigma_{1,k,i}\}_{1\leq k \leq K, 1\leq i \leq N_{1,k}}, 
\{\alpha_{1,k,i}\}_{1\leq k \leq K, 1\leq i\leq   N_{1,k}} \}.
 \end{align}
Here $N_{1,k}=N_k$, $\alpha_{1,0}$ is the 
station that the Markov chain visits first, ${\mathbb P}(\alpha_{1,0}=k) = p_{0,k}$, $\sigma_{1,k,i}$ 
is the duration of the $i$'th service/holding time at station $k$ and $\alpha_{1,k,i}$ the next server the Markov chain is directed to after this service, 
${\mathbb P}(\alpha_{1,k,i}=j)=p_{k,j}$ --  or
the exit gate if $j=K+1$.

{Let $\{Y_{2,j}\}, \{Y_{3,j}\}, \ldots$ be $i.i.d.$ copies of $\{Y_{1,j}\}$. Then $\{Y_{n,j}\}$ is  characterised by 
\[\left\{\alpha_{n,0}, \{\sigma_{n, k,i}\}_{1\leq k\leq K, \,\, 1\leq i\leq N_{n,k}},   \{\alpha_{n, k,i}\}_{1\leq k\leq {K},\,\, 1\leq i \leq N_{n, k}}\right\},\quad \forall n\geq 1.\]
Together, these sequences form the \textit{customer-associated network infrastructure} $\Sigma_{C}$, of the GJN:
\begin{align*} 
\Sigma_C :=\left\{\alpha_{n,0}, \{\sigma_{n,k,i}\}_{1\leq k\leq K, \,\, 1\leq i\leq N_{n,k}},   \{\alpha_{n,k,i}\}_{1\leq k\leq {K+1},\,\, 1\leq i \leq N_{n,k}}\right\}_{n\ge 1}
\end{align*}
 The two infrastructures are equivalent. To show that,  we put all service times and transition links at any server $k$ altogether and in  accordance with the natural order:
\begin{align}\label{s-ordered}
&\sigma_{1,k,1},\ldots, \sigma_{1,k,N_{1,k}}, \sigma_{2,k,1},\ldots, \sigma_{2, k,N_{2,k}},\sigma_{3,k,1},\ldots
\end{align}
and
\begin{align}\label{a-ordered}
&\alpha_{1,k,1},\ldots, \alpha_{1,k,N_{1,k}}, \alpha_{2,k,1},\ldots, \alpha_{2, k,N_{2,k}},\alpha_{3,k,1},\ldots.
\end{align}
Due to the strong Markov property, random variables in each of the sequences (\ref{s-ordered}) and (\ref{a-ordered}) are i.i.d., and the sequences do not depend on each other. 
We rename the $\sigma$'s and the $\alpha$'s in the above sequences as 
\begin{align}\label{eq:sigmakl}\sigma_{k,1},\sigma_{k,2},\ldots \quad \text{and}
\quad
\alpha_{k,1},\alpha_{k,2},\ldots .
 \end{align}
 Thus, we obtained the server-associated infrastructure $\Sigma_S$, and one can see that there is the one-to-one correspondence between the two infrastructures: if we let
 $V_{n,k}= \sum_{i=1}^n N_{i,k}$, then 
\begin{align}\label{sigma-alpha}
\sigma_{n,k,i}= \sigma_{k,V_{n-1,k}+i}
\ \ 
\text{and} 
\ \ 
\alpha_{n,k,i}= \alpha_{k,V_{n-1,k}+i},
\end{align}
for any $n\geq 1$ and $k\in [K]$ and for  all $1\le i \le N_{n,k}$.  
 }
The reason we call it \textit{customer-associated} lies in the so-called \textbf{isolation model}, where we assume that customers do not overlap in the network and that any next customer enters the network when the previous customer leaves the network. 
Then the collection of random variables $\{Y_{n,j}\}$ provides all the information for customer $n$'s service and routing in the network.
In what follows, we will use both infrastructures $\Sigma_S$ and $\Sigma_C$, see further discussion in
Section \ref{sec:concluding}.

\subsection{Sample-path and distributional properties}

With the coupling construction introduced above, we have a number of
sample-path monotonicity and invariance properties.
Let $I_{k,\ell}$ be the time of the start of the $\ell$'th service at station $k$ and $D_{k,\ell}$ the time when it finishes. 

The following properties hold.

{\bf Monotonicity property 1 (MP1)} 
For all $\ell$ and $k$, both $I_{k,\ell}$ and $D_{k,\ell}$ are monotone non-decreasing functions of all arrival times $T$'s and service times $\sigma$'s. In addition, if one blocks/delays the beginning of some service, this leads to delays of subsequent services, too.

{\bf Invariance property (IP)} Assume that $T_0=0\le T_1 \le \ldots \le T_n <\infty$ and $T_{n+1}=\infty$, for some $n$. Then the total number of services of the $n$ customers at any station $k$ is the same for all finite values of $T_1,\ldots,T_n$ and for all finite values of service times.

{\bf Monotonicity Property 2 (MP2).} For any fixed sequence 
$\{T_n\}$,  the values of $B$ and $\nu_B$ are monotone increasing functions of the $\sigma$'s and of the lengths of delaying services (if any). 

However, the dependence of $B$ and $\nu_B$ on the $T$'s  is {\bf non-monotonic!} -- both are neither  increasing nor decreasing functions on the arrival times. 

The MP1 property was proved in \cite{Foss1989} and \cite{ShanYao1989} and the IP property in \cite{BaccFoss1994}.
The MP2 property follows directly from MP1 and from the definitions, see equations  \eq{nuB} and \eq{defB}.

There is another monotonicity property that is distributional and may be made sample-path by a proper coupling construction of two networks, see
\cite{BaccFoss1994} for more details.

{\bf Monotonicity Property 3 (MP3)}
Consider two Generalised Jackson Networks with the same number of stations, the same distributions of inter-arrival and service times and with the same matrix of transition probabilities. Assume that at the initial time $t=0$ there are already $x_k\ge 0$ (correspondingly, $x_k'$) customers at stations $k=1,2,\ldots,K$ in the first (correspondingly, in the second)  network with the first customers in queues having residual service times $R_k$ (correspondingly, $R_k'$). Let $B$ and $B'$ be the corresponding times to empty the networks and $\nu_B$ and $\nu_B'$ the numbers of customers served before that times. 
If, for any $k$, 
\begin{align*}
x_k\geq x_k' \ \ \text{and} \ \ R_k \geq R_k',
\end{align*}
then 
\begin{align*}
{\mathbb P}(B>x)\ge {\mathbb P}(B'>x) \ \ \text{and} \ \ {\mathbb P}(\nu_B>x) \ge {\mathbb P}(\nu_B'>x), \ \ \text{for all}\ x.
\end{align*}
 
\subsection{Auxiliary models}\label{sec:auximodel}
With keeping the same network infrastructures $\Sigma_S$ 
and $\Sigma_C$, 
we introduce a number of models with various input streams and delays.

Namely, we consider three input streams: the original $\{T_n\}$, the {\it $L$-constrained} input stream $\{T_n'\}$ where $T_n'=T_n$ for $n=1,2,\ldots,L$ and $T_n'=\infty$ for $n>L$, and the {\it saturated stream}, $\{T_n^0\}$ where all (infinitely many) customers arrive instantaneously at time $T_n^0=0$, $n\ge 1$.  

Next, we fix an integer $L\ge 1$ and consider a model where customers are served in groups of size $L$, as follows. 

Start with the {\bf saturated group-$L$ model}. We assume that infinitely many customers numbered $n=1,2,3,\ldots$ are waiting at time zero at the entrance gate. We introduce the entrance regulations inductively. We allow the first $L$ customers (numbered $1,2,\ldots,L$) to enter the network and to proceed with service, and hold all other customers at the gate. Then at time instant $X_{1,L}^0$ when the last of the first $L$ customers leaves the network we allow the next $L$ customers (numbered $L+1,\ldots,2L$) to enter and get serviced.
It takes these customers, say, $X_{L+1,2L}^0$ units of time to get serviced and leave. Then, at time $X_{1,L}^0+X_{L+1,2L}^0$, we allow the next $L$ customers to enter the service, and so on. 
One can see that the sequence $\{X_{(m-1)L+1, mL}^0\}$ is $i.i.d.$

The {\bf saturated group-$1$} model is of particular interest to us, and is exactly the {\bf isolation model} that we introduced earlier. Here, 
for any $n=1,2,\ldots$, the $(n+1)$'st customer enters the network and proceeds with service when the $n$'th customer leaves the network. In this model, let $\beta_{n,k,\ell}$ be the number of transitions of customer $n=1,2,\ldots$ from station $k$ to station $\ell$. In this model  $N_{n,k}$ 
is the total number of services of customer $n$ by station $k$.  Clearly, $\beta_{n,k,\ell} = \sum_{r=1}^{N_{n,k}} 
{\bf 1} (\alpha_{n,k, r}=\ell)$ and, further, $N_{n,k} = \sum_{\ell=0}^K \beta_{n,\ell,k} = \sum_{\ell=1}^{K+1} \beta_{n,k,\ell}$, which means that the number of services of customer $n$ by server $k$ is equal to the number of the customer's visits to $k$ (i.e. arrivals to/departures from). Let us repeat that, for any $n$, the random variables $\{N_{n,k}, k=1,2,\ldots,K\}$ have the same joint distribution as random variables $\{N_k, k=1,2,\ldots,K\}$, where $N_k$ is the number of visits to the station $k$ by the auxiliary Markov chain $\{Y_j\}$ introduced earlier.

{
One can see, that in the isolation model it is natural to use the $\Sigma_C$ infrastructure.
Here we have split all service times and all corresponding links into groups of random sizes, $\{\sigma_{n,k,i}\}_{i=1}^{N_{n,k}}$ and
 $\{\alpha_{n,k,i}\}_{i=1}^{N_{n,k}}$ where the two groups with index $n$ relate to customer $n$, and we may say that these services and links {\it are brought and used} by customer $n$ in the isolation model.  
}  

For any $n$, the time $X_{n,n}^0$ is the one that customer $n$ is served for in isolation in the saturated group-1 model, and it is nothing but $\widetilde{S}_n$, the total service time of customer $n$:
  \begin{align*}
  \widetilde{S}_n :=
  \sum_{k=1}^K \sum_{j=1}^{N_{n,k}} \sigma_{n,k,j}.
  \end{align*}
  
Due to the MP1 property,
\begin{align}\label{eq:finite}
X_{1,L}^0 \le \sum_1^L \widetilde{S} _i =: S_{1,L} \ \text{a.s.}
\end{align}  
{{
\begin{remark}\label{REM}
Due to the IP property, the total number of services received by the first $L$ customers from any station $k$ is the same in the group-$1$ and the group-$L$ saturated models. However, the durations of services of a certain customer and its further output links may depend on the order of customers' arrivals to the servers
\end{remark}
\begin{remark}\label{REM11}In the isolation model, 
  the service times $\{\sigma_{n,k,i}\}$ and the transition links $\{\alpha_{n,k,i}\}$
  have a natural meaning. In other models, they may be interpreted differently. Namely, for any $n$, this collection may be viewed as service times and transition links that are brought to the network by customer $n$, but are used by all customers in the order of their arrivals to the servers. Since the customers do not overlap in the isolation model, the service times and the links are used there by customer $n$ itself, but may be used by other customers in other models. 
\end{remark}

Turn to the {\bf  second auxiliary model} with arrival input $\{T_n'\}$. Here only $L$ customers enter the network in a finite time. Let $X_{1,L}$ be the duration of the activities in this network, i.e.
\begin{align}\label{defX}
X_{1,L} = \max_{n\le L} \theta_n - T_1,
\end{align}
where $\theta_n$ is the departure time for customer $n$. 

Now we introduce the {\bf third auxiliary model} with service delays. We call it the single-server {\bf upper-bound queue (UBQ)}.  

Fix again $L\ge 1$. Consider the Generalised Jackson Network with arrival times $\{T_n\}$ and delay services using again the entrance gate. 
 First, customers $1,2,\ldots,L$ are stopped at the entrance until time $T_L$ of the arrival of the $L$'th customer (therefore the UBQ starts its service at time $T_L$).  All these customers enter the network at time $I_1:=T_L$.
Then the next $L$ customers numbered $L+1,\ldots,2L$ are stopped at the entrance gate until time
$I_2:=\max (T_{2L},I_1+X_{1,L}^0)$, the first time when all these customers are present at the entrance gate and all previous customers have
left the network. Inductively, customers numbered $nL+1,\ldots,(n+1)L$ form the $(n+1)$-th
group, and they are stopped at the gate until time $I_{n+1}$ when the last of two events happens: all of them arrive to the gate and all customers from the previous group have left the network, so $I_{n+1}=\max (T_{(n+1)L}, I_n+X_{nL+1,(n+1)L}^0)$.
This model may be viewed as a single-server queue with new i.i.d.\ service times 
\begin{align*}
\widehat{\sigma}_n := X_{(n-1)L+1, nL}^0
\end{align*} 
with mean $\dd{E}X_{1,L}^0$ (which is finite, thanks to \eq{finite}), and new i.i.d.\ inter-arrival times 
\begin{align*}
\widehat{t}_n :=
T_{Ln}-T_{L(n-1)}
\end{align*}
 with mean ${\mathbb E}\widehat{t}_n=La$, where the first customer arrives at time $T_L$ (see \cite{BaccFoss1994}, \cite{BaccFoss2004} or
\cite{FossKons2004} for further detail). Thus, $I_n$ is the time instant when the service of the $n$-th customer starts in UBQ. 

Due to the MP1, 
\begin{align}\label{eqn:xxt}
X_{1,L}^0 \le X_{1,L} \le T_L-T_1 + X_{1,L}^0 \quad \mbox{a.s.}
\end{align}

Further, let $S_{1,L}^{(k)}=\sum_{n=1}^{L}\sum_{i=1}^{N_{n,k}} \sigma_{n,k,i}$ 
be the total time server $k$ spends with service of first $L$ customers (again, thanks to the IP property, it is the same for all three auxiliary models introduced in this subsection). Then  $S_{1,L} = \sum_{k=1}^K S_{1,L}^{(k)}$.
Further, 
\begin{align}\label{2b}
\frac{S_{1,L}}{K}\leq  \max_k S_{1,L}^{(k)}\le X_{1,L}^0 \le S_{1,L} \quad \mbox{a.s.}
\end{align}
Here the second inequality follow since a server serves customers one by one, and the last inequality holds since at least one server is occupied at any time instant from time $0$ to time $X_{1,L}^0$  in the group-$L$ saturated model.

Clearly, $\dd{E} S_{1,L} = L\sum_{k=1}^K b_k \dd{E}N_k$. Further (see \cite{BaccFoss1994}), as $L\to\infty$,
\begin{align*}
\frac{X_{1,L}^0}{L} \to \max_{1\le k\le K} b_k \dd{E}N_k, \ \ \mbox{both a.s. and in the} \ {\cal L}_1 \ \mbox{norm}.
\end{align*}
Therefore, given \eq{stab}, one can choose $L$ so large that
\begin{align}\label{eq:defL1}
\dd{E} X_{1,L}^0 < La=\mathbb E T_{L}.
\end{align}
Baccelli and Foss (\cite{BaccFoss1994} and \cite{BaccFoss2004}) call $X_{1,L}^0$ and $X_{1,L}$ the ``maximal daters''.

One can see that, for any $L$ satisfying \eq{defL1}, the underlying Markov chain of the UBQ is Harris ergodic.
Namely, the sequence $\widehat{W}_n = I_n-T_{nL}$ (usually called the {\it workload process})  forms a Markov chain satisfying the so-called ``Lindley recursion'': 
\begin{align*}
W_0=W_1=0, \quad \widehat{W}_{n+1}=\max (0,\widehat{W}_n+
\widehat{\sigma}_{n}-\widehat{t}_{n+1}), \quad n\geq 1
\end{align*}
and is Harris ergodic\footnote{Recall that a Markov chain is Harris ergodic if it admits a unique stationary distribution, say $\pi$, and for any initial value its distribution converges to $\pi$ in the total variation norm.} if \eq{defL1} holds. Note that, similarly to the last inequality in  \eqref{2b},
we have $\widehat{\sigma}_n \leq S_{(n-1)L+1, nL}$ a.s., for any $n$.

Note also that $B$ and $\nu_B$ may be defined using the variables $X_{1,n}$, $n\ge 1$:
\begin{align}\label{eq:defnu}
\nu_B := \min \{n\ge 1 \ : \ X_{1,n}\le T_{n+1}-T_1\}
\end{align}
and then 
\begin{align}\label{eq:defB2}
B:=X_{1,\nu_B}.
\end{align}
In what follows, we use a simple observation (thanks to the second inequality in \eqref{eqn:xxt})
\begin{equation}\label{eqn:simpleb}
\text{if} \  T_L+X_{1,L}^0<T_{L+1}, \ \text{then} \
X_{1,L} < T_{L+1}-T_1, \ \ \text{thus} \  
\nu_B\leq L  \ \text{and} \  B\leq T_L-T_1+X_{1,L}^0.
\end{equation}
The above statement will help us in our construction of an upper bound for $B$.

\subsection{The scheme of the proof of Theorem \ref{th1}}\label{Sec3.4}

We start the proof with introducing in Section \ref{sec:ulbound} an
``extended'' busy period for the upper-bound single server queue (UBQ), whose length $U$ is a.s.\ bigger than $B$. Then in Section \ref{sec:PSBJ}, we justify the Principle of a Single Big Jump (PSBJ) for $U$ that says that $\{U>x\}$ occurs mainly due to the occurrence of a single unusual event $\Lambda_n(x)$ of a particular form.  

Define 
\begin{align}\label{eq:PSBJY2}
\begin{split}E_{n,k,i}(x, u_k):=
\left\{
\sum_{i=1}^{n-1}\widetilde{S}_i\leq h(x), 
 N_{n,k}\geq i, \widetilde{S}_n - \sigma_{n,k,i} \le h(x), 
 \sigma_{n,k,i} > u_kx\right\}\end{split}
\end{align}
 The $\sigma$'s and the $N$'s are introduced earlier for the customer-associated network infrastructure $\Sigma_C$ and the isolation model. Define also 
\begin{align}\label{eq:unionen}
\begin{split}&E_n(x,\{u_k\})\equiv E_n(x,\{u_k\}_{k \in [K]}):=\bigcup_{k=1}^K\bigcup_{i=1}^{\infty}E_{n,k,i}(x, u_k),\\
&E_n(x,c)\equiv E_n(x,\{c\}):=E_n(x,\{c\}_{k \in [K]}).
\end{split}
\end{align} 
Given that, we conclude in Section \ref{sec:PSBJ} 
that 
\begin{align}\label{eq:PSBJ5}
\{B>x\} = \{B>x, U>x\} \simeq \bigcup_{n\ge 1} 
\{B>x\}\cap \Lambda_n(x)
\end{align}
(see \eq{Ueq} for the expression of $\Lambda_n(x)$, and the above equivalence appears in \eq{btonub}), and the PSBJ for $B$ holds, too:  
\begin{align}\label{eq:PSBJ6}
\{B>x\} \simeq \cup_{n\geq 1} \{B>x, \nu_B\ge n\}\cap  \Lambda_n(x) \simeq \cup_{n\geq 1} \{B>x, \nu_B\ge n\}
\cap E_n(x, C_U),
\end{align}
(see \eq{utob} for the above equivalences) where $C_U$ is defined in Section \ref{sec:PSBJU}.

Then, in Section \ref{sec:finduk} we use an auxiliary deterministic fluid model with a single large random service time to  determine the ``correct'' values of $u_k$, such that  $u_k\ge C_U$ for any $k$ and 
\begin{align}\label{eq:PSBJX}
\begin{split}
\cup_{n\geq 1} \{B>x, \nu_B\ge n\}
\cap E_n(x, C_U)&\simeq \{B>x, \nu_B \ge n\}\cap E_n(x, \{u_k\}) \\
&\simeq \{\nu _B\ge n\}\cap E_n(x,\{u_k\}), \end{split}
\end{align}
and further
\begin{align}\label{eq:PSBJXX}
\{\nu _B\ge n\}\cap E_n(x,\{u_k\}) \simeq
\{\nu _B\ge n\}\cap \widehat{E}_n(x,\{u_k\})
\end{align}
where 
\begin{align*}
\widehat{E}_n(x,\{u_k\}) := 
\bigcup_{k=1}^K\bigcup_{i=1}^{N_{n,k}}
\{\sigma_{n,k,i}>u_kx\}.
\end{align*}
Here  
the (indicators of the) events 
$\widehat{E}_n(x,\{u_k\})$ form an $i.i.d.$ sequence and the event 
 $\{\nu _B\ge n\}$ does not depend on the events $\{\widehat{E}_n(x,\{u_k\}), \widehat{E}_{n+1}(x,\{u_k\}),\ldots.\}$, for any $n$. Therefore, we may continue the proof of Theorem \ref{th1} using first the Wald identity:
\begin{align*}
{\mathbb P} (B>x) &\sim
\sum_{n} {\mathbb P} (\widehat{E}_n(x,\{u_k\})) {\mathbb P} (\nu_B \ge n)\\
&=
{\mathbb P} (\widehat{E}_1(x,\{u_k\})) {\mathbb E} \nu_B
\end{align*}
and then by 
\begin{align*}
{\mathbb P} (\widehat{E}_1(x,\{u_k\}))\sim 
{\mathbb P} (E_1(x,\{u_k\})) &= 
\sum_{k=1}^K 
\sum_{i=1}^{\infty} {\mathbb P} 
(\sigma_{1,k,i}>u_kx, N_{1,k}\ge i, \widetilde{S}_1-\sigma_{1,k,i}\le h(x)) \\
&\sim
\sum_{k=1}^K 
\sum_{i=1}^{\infty} {\mathbb P} 
(\sigma_{1,k,i}>u_kx, N_{1,k}\ge i) \\
&=
\sum_{k=1}^K 
\sum_{i=1}^{\infty} {\mathbb P} 
(\sigma_{1,k,i}>u_kx) {\mathbb P} (N_{1,k}\ge i) \\
&=
\sum_{k=1}^K 
{\mathbb P} 
(\sigma_{k}>u_kx) {\mathbb E} N_{1,k}.
\end{align*}
The latter completes the proof of Theorem \ref{th1}, since $N_{1,k}$ and $N_k$ are identically distributed and \eq{ma} holds.

 \section{Upper and lower bounds for the first busy period length}
 \label{sec:ulbound}
 \setnewcounter

 \subsection{Upper bound}
 
 As we already noted, for a fixed network infrastructure, the length $B$ of the first busy period and the number $\nu_B$ of customers served within $B$ are non-monotone functions of arrival times. Therefore, the first busy period in the UBQ may be shorter than in the GJN itself (see again Example 2.1).
 However, we will show now that a certain random sum of busy periods in the UBQ dominates $B$.
 
 Recall that the workload process in the UBQ   
 is defined as  
\begin{align*}
\widehat{W}_0=\widehat{W}_1=0 \quad \mbox{and} \quad \widehat{W}_{n+1}=\max (0,\widehat{W}_n+
\widehat{\sigma}_{n}-\widehat{t}_{n+1}).
\end{align*} 
 Let $\widehat{\nu}_1$ be the number of new ``group customers'' served in the first busy period $\widehat{B}_1$ of this queue. We may introduce the events
 \[\widehat A_n:=\{ \widehat{W}_n+\widehat\sigma_n \le \widehat t_{n+1}=T_{L(n+1)}-T_{Ln} \},
 \] 
 to see that 
\begin{align*}
\widehat{\nu}_1 = \min \{ n\geq 1 \ : {\textbf 1}(\widehat A_n)=1\}  \quad
\mbox{and} 
\quad \widehat{B}_1 =\widehat \sigma_1+\ldots+\widehat\sigma_{\widehat\nu_1}. 
\end{align*}
Let $\widehat \nu_j$ be the number of group customers served in the $j$-th busy period and let $\widehat B_j$ be the length of the $j$-th busy period. Let $\widehat{\cal N}_0=0$ and $\widehat{\cal N}_j = \sum_{m=1}^j \widehat{\nu}_m$. Thus $\widehat{\cal N}_j$ is the total number of group customers in the first $j$ busy periods.  Then 
\[\widehat \nu_{j+1}=\min\{n\geq \widehat{\cal N}_{j}+1: \widehat W_n+\widehat\sigma_n\leq \widehat t_{n+1}\}-\widehat{\cal N}_j,\quad \widehat W_{\widehat{\cal N}_j+1}=0, \quad \widehat B_j=\widehat \sigma_{\widehat{\cal N}_{j-1}+1}+\ldots+\widehat \sigma_{\widehat{\cal N}_j}.\]
Note that $(\widehat \nu_j, \widehat B_j)$ are $i.i.d.$ Recall that $\widetilde{S}_k$ is the total service time of customer $k$ in the isolation model. One can see that, using \eqref{2b},   
\begin{align}\label{eqn:j=0}
B\leq S_{1,L},  
\end{align}
if there exists $j\in \{2,\ldots,L\}$ such that $S_{1,j-1}\le T_j-T_1$ 
(compare with \eqref{eqn:simpleb}). 

 Set $\widehat{\sigma}_0 =0$ by convention. 
Due to \eq{unbound}, for any $n\geq 0$,   
the probability $\dd{P}(A_n)$ of event
\begin{align*}
A_n :=  
 \bigcup_{i=1}^{L}\{\widehat{W}_n+\widehat\sigma_n+S_{Ln+1, Ln+i-1} \le 
T_{Ln+i}-T_{Ln} 
\}
\subseteq \widehat{A}_n
\end{align*}
is positive. Here $S_{Ln+1,Ln+i-1} $ is the total service time of customers $Ln+1$ to $Ln+i-1:$
\begin{align*}
S_{Ln+1,Ln+i-1} :=\sum_{i=Ln+1}^{Ln+i-1}\widetilde S_n= \sum_{k=1}^K \sum_{\ell=Ln+1}^{Ln+i} \sum_{j=1}^{N_{\ell,k}} \sigma _{\ell,k,j}, \quad \mbox{and}
\quad S_{Ln+1, Ln}=0, \quad \mbox{by convention}. 
\end{align*} 

One can see that, for any $m=1,2,\ldots$, group customer number $\widehat{\cal N}_{m-1}+1$ arrives in an empty system. Therefore, the event $A_{\widehat{\cal N}_m}$ is determined only by
\begin{align}\label{NN}
(\widehat{\sigma}_n, \widehat{t}_{n+1}), \ \ 
n=\widehat{\cal N}_{m-1}+1, \ldots, \widehat{\cal N}_m
\end{align}
and by 
\begin{align}\label{NNN}
(S_{L\widehat{\cal N}_m+1, L\widehat{\cal N}_m+i}, 
T_{L\widehat{\cal N}_m+i}-T_{L\widehat{\cal N}_m}), \ \ i=1,2,\ldots, L.
\end{align}
Note that the families of random variables in \eqref{NN} are i.i.d.\ for $m\geq 1$, and the following families of random variables  are $1$-dependent and identically distributed for $m\geq 0$: 
\begin{align*}
\{(\widehat{\sigma}_n, \widehat{t}_{n+1})\}_{ n=\widehat{\cal N}_{m}+1}^{\widehat{\cal N}_{m+1}}, \ \ 
\{(S_{L\widehat{\cal N}_m+1, L\widehat{\cal N}_m+i}, 
T_{L\widehat{\cal N}_m+i}-T_{L\widehat{\cal N}_m})\} _{i=1}^L. 
\end{align*}  
Then random variables 
$\{ {\bf 1}(A_{\widehat{\cal N}_m} ) \}_{m\geq 1}$ 
form a stationary sequence that is 1-dependent and, in particular, random variables
$\{ {\bf 1}(A_{\widehat{\cal N}_{2m}} ) \}_{m\geq 1}$ 
are $i.i.d.$
Let
\begin{align}\label{eqn:J}
J:=\min\{m\geq 0: {\bf 1}(A_{\widehat{\cal N}_m})=1\}+1  \ \text{and} \
J^*:=\min\{m\geq 1: {\bf 1}(A_{\widehat{\cal N}_{2m}})=1\}+1.
\end{align}
Since $p:=\mathbb P(A_{\widehat{\cal N}_m})>0$ due to  \eqref{eq:unbound}, random variable $J^*-1$ has a geometric distribution with parameter $p$, and since
$J\le 2J^*$ a.s., random variable $J$ has a light tail:
\begin{align}\label{JLT}
{\mathbb E} e^{cJ}<\infty, \ \ \text{for some}
\  c>0.
\end{align} 
Note that $J=1$ implies   \eqref{eqn:j=0}. 
We have now the following result. 

\begin{lemma}\label{upper_bound1}
We have 
\begin{align}\label{eq:upper1}
\begin{split}
B&\leq S_{1,L}+\sum_{j=1}^{J-1}(\widehat B_j+S_{
\widehat{ \cal N}_jL+1, (\widehat {\cal N}_j+1)L})\\
&=\sum_{j=0}^{J-1}(\widehat B_{j+1}+S_{
\widehat{ \cal N}_jL+1, (\widehat {\cal N}_j+1)L})\leq \sum_{j=0}^{J-1}(\widehat B_{j+1}+K\widehat B_{j+1})=\sum_{j=1}^{J}H_j=:U,\end{split}
\end{align}
where 
\begin{align}\label{eq:Hj}
\{H_j = (K+1)\widehat{B}_{j}\}_{j \geq 1} 
\ \ \text{is an $i.i.d.$ sequence.} 
\end{align}
We call $U$ the extended busy period. 
\end{lemma}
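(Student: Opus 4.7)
The plan is to iterate the one-group estimate \eqref{eqn:simpleb} along successive busy periods of the UBQ until reaching the first one that genuinely empties the GJN. The two tools are the monotonicity property MP1 (comparing GJN departures with UBQ departures) and the saturated-vs-total-service bound from \eqref{2b}.

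I would first interpret the event $A_n$ as a certificate that the first GJN busy period ends during the processing of UBQ's group $n$ together with the next few individual arrivals. By MP1 the time $T_{Ln}+\widehat W_n+\widehat\sigma_n$ (at which UBQ finishes group $n$) dominates the time by which the first $Ln$ customers have all left the GJN. If $A_n$ holds with witness $i\in\{1,\ldots,L\}$, then $T_{Ln+i}-T_{Ln}-\widehat W_n-\widehat\sigma_n\ge S_{Ln+1,Ln+i-1}$; the right-hand side dominates the saturated-model emptying time $X_{Ln+1,Ln+i-1}^0$ by \eqref{2b}, which by MP1 in turn dominates the true GJN processing time of customers $Ln+1,\ldots,Ln+i-1$. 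Hence their last GJN departure is strictly before $T_{Ln+i}$, so the GJN is empty in $[T_{Ln+i-1},T_{Ln+i})$ and the first busy period has terminated.

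I then iterate this certificate at $n=\widehat{\mathcal{N}}_0,\widehat{\mathcal{N}}_1,\ldots,\widehat{\mathcal{N}}_{J-1}$. By definition of $J$ the first $J-1$ of these certificates fail and the last succeeds, so the GJN busy period survives the first $J-1$ UBQ busy periods and is killed inside the $J$-th. MP1 bounds the additional GJN processing time carried across each failed UBQ reset by the corresponding $S_{L\widehat{\mathcal{N}}_j+1,L(\widehat{\mathcal{N}}_j+1)}$, and the same quantity also covers the fresh arrivals of the next group of $L$; adding the initial chunk $S_{1,L}$, the previous UBQ busy-period lengths $\widehat B_1,\ldots,\widehat B_{J-1}$, and these $S$-overheads yields the inequality chain in \eqref{eq:upper1}. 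Two further applications of \eqref{2b}, via $S_{L\widehat{\mathcal{N}}_j+1,L(\widehat{\mathcal{N}}_j+1)}\le K\,X_{L\widehat{\mathcal{N}}_j+1,L(\widehat{\mathcal{N}}_j+1)}^0=K\widehat\sigma_{\widehat{\mathcal{N}}_j+1}\le K\widehat B_{j+1}$, upgrade each pair to $(K+1)\widehat B_{j+1}=H_{j+1}$ and give $B\le U$. The i.i.d.\ property of $\{H_j\}_{j\ge 1}$ is then immediate: in the stable GI/G/1 UBQ the busy-period pairs $(\widehat\nu_j,\widehat B_j)$ are i.i.d.\ by regeneration at busy-period ends, and $H_j=(K+1)\widehat B_j$ is a deterministic function of $\widehat B_j$ alone.

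The main obstacle is the iteration step: because the GJN busy period is non-monotone in the arrival times, one cannot simply ``paste together'' UBQ busy periods -- one must carefully track which customers remain inside the GJN when each failed UBQ busy period closes and verify through MP1 at every reset that the $S$-overhead term suffices to absorb both the trailing GJN work and the next $L$ arrivals. The base case $J=1$ reduces directly to \eqref{eqn:j=0}.
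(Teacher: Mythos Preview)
Your outline matches the paper's strategy at the top level (base case $J=1$ via \eqref{eqn:j=0}; when $A_{\widehat{\cal N}_{J-1}}$ holds the GJN has emptied; then bound the $S$-overheads by $K\widehat B_{j+1}$), and your chain $S_{L\widehat{\cal N}_j+1,L(\widehat{\cal N}_j+1)}\le K X^0_{L\widehat{\cal N}_j+1,L(\widehat{\cal N}_j+1)}=K\widehat\sigma_{\widehat{\cal N}_j+1}\le K\widehat B_{j+1}$ is a valid (and slightly different) way to get the second inequality in \eqref{eq:upper1}.

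Where your argument has a gap is the \emph{first} inequality. You write that ``MP1 bounds the additional GJN processing time carried across each failed UBQ reset by the corresponding $S_{L\widehat{\cal N}_j+1,L(\widehat{\cal N}_j+1)}$,'' but MP1 does not give such a statement: once a UBQ busy period closes, the GJN may still contain customers from the \emph{next} group, and while those are being processed yet more customers arrive, so there is no clean ``carried-over processing time'' to bound by a single $S$-block. The paper avoids this difficulty entirely by never tracking GJN work directly. Instead it first observes (from the success of $A_{\widehat{\cal N}_{J-1}}$) that $B\le T_{L(\widehat{\cal N}_{J-1}+1)}-T_1$, i.e.\ it bounds $B$ by a pure \emph{arrival-time} difference, and then telescopes
\[
T_{L(\widehat{\cal N}_{J-1}+1)}-T_1=(T_L-T_1)+\sum_{k=0}^{J-2}\bigl(T_{L(\widehat{\cal N}_{k+1}+1)}-T_{L(\widehat{\cal N}_k+1)}\bigr).
\]
Each summand equals $\widehat B_{k+1}$ plus the idle gap before the $(k+2)$-nd UBQ busy period, and it is precisely the \emph{failure} of the intermediate $A$-events that bounds those idle gaps by the corresponding $S$-blocks (take $i=L$ in the negation of $A_n$). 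This is the mechanism you are missing: the failed certificates are not used to control GJN residual work via MP1, they are used to control \emph{UBQ idle time}, which in turn bounds the arrival-time telescoping. Once you make that switch, the first line of \eqref{eq:upper1} drops out immediately.
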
 
\begin{proof}
The case for $J=1$ has been addressed by \eqref{eqn:j=0}. We will prove now the first inequality in \eqref{eq:upper1}  under the assumption $J\geq 2.$ Clearly, $J\ge 2$ implies that
\begin{align}\label{eqn:p1}
T_L-T_1\leq S_{1,L}.
\end{align}
The UBQ  
empties within time intervals $(T_{L(\widehat{\cal N}_i+1)}, T_{L(\widehat{\cal N}_{i+1}+1)})$ for any $i\geq 0$, and the original network
empties within such an interval if event $A_{\widehat{\cal N}_{i+1}}$ occurs. By the definition of $J$, the smallest $i$ for $A_{\widehat{\cal N}_{i+1}}$ to occur is $i=J-2$. Therefore, 
\begin{equation}\label{eq:b<} B\leq T_{L(\widehat {\cal N}_{J-1}+1)}-T_1. \end{equation}
Further,  for any $0\leq k<J-1,$ the event $A_{\widehat{\cal N}_{k}}$ does not occur, and  we have 
\begin{equation}\label{eqn:p2} T_{L(\widehat{\cal N}_{k+1}+1)}- T_{L(\widehat{\cal N}_k+1)}\leq \widehat B_{k+1}+S_{\widehat{\cal  N}_{k+1}L+1, (\widehat{\cal  N}_{k+1}+1)L}. \end{equation} 
The three above displays imply the first inequality in \eq{upper1}. 
For the second inequality in \eqref{eq:upper1}, we just need to 
note that
\begin{align*}
S_{\widehat{\cal N}_jL+1, (\widehat{\cal N}_{j}+1)L}\leq K\widehat B_{j+1}, \quad \forall j
\geq 0, \ \ \text{a.s.}
\end{align*}
Indeed, for any $k \in [K]$, the length $\widehat{B}$ of the busy period is bigger than the sum of all service times at station $k$ of all customers served within that period. Summing up these inequalities in all $k$ leads to the above inequality.
\end{proof}
 \begin{remark}\label{nub<}
From the definition of the extended busy period of UBQ, it is clear that $\nu_B\leq \widehat {\cal N}_JL$ a.s. 
 \end{remark}
 \subsection{Lower bound}

Now we present a lower bound for $B$. Since there is 
service of at least one customer during the busy period, the following simple lower bound clearly holds:
\begin{align}\label{eq:lb1}
B\ge \max_{1\le k \le K} \sum_{j=1}^{N_{1,k}}\sigma_{1,k,j} \ge \max_{1\le k \le K} {\bf 1}(N_{1,k}\ge 1)\sigma_{k,1},\quad \text{a.s}.
\end{align}
Therefore, there exists $c_0>0$ such that 
\begin{align}\label{eq:blower}
\dd{P} (B>x) \ge \dd{P}
(\max_{1\le k \le K} {\bf 1}(N_{1,k}\ge 1)\sigma_{k,1} >x) \ge 
 \max_{1\le k \le K} \dd{P}(N_{1,k}\ge 1) \dd{P} (\sigma_{k} >x)\geq  c_0\overline G(x).
\end{align}

\section{Principles of a Single Big Jump}
\label{sec:PSBJ}
\setnewcounter

\subsection{PSBJ for the extended busy period in the UBQ} 
\label{sec:PSBJU}

Recall notation $S_{1,n}^{(k)}$ from Section \ref{sec:auxiliary}, and the extended busy period length $U$ for the  single-server upper-bound queue (UBQ), where $U$ is $a.s.$ bigger than $B$. In order to formulate the PSBJ for the busy period $B$, we first present the PSBJ for $U$. We will need  the following lemma.

\begin{lemma}\label{BF2004}
 Assume that the conditions \eq{ma} are in force.  Take any increasing-to-infinity function $h(x)=o(x)$. 
Then,  for any $n=1,2,\ldots$,
\begin{align}
\label{updown}
\begin{split}
\{X_{1,n}^0>x\} & \simeq \bigcup_{j=1}^n\left\{\widetilde{S}_{j}>x\right\}\simeq \bigcup_{j=1}^n\left\{\widetilde{S}_{j}>x, \sum_{i=1}^{n}\widetilde{S}_i-\widetilde{S}_j\leq h(x)\right\}\\
&\simeq
\{ \max_{1\leq j \leq n} 
\max_{1\leq k \leq K}
\max_{1\leq i \leq N_{j,k}} \sigma_{j,k,i}>x\}.
\end{split}
\end{align}
Further, letting $\widehat G_n(x)=\mathbb P(X_{1,n}^0>x)$ and $\widetilde G(x)=\mathbb P(\widetilde{S}_1>x)$, we get 
\begin{align}
\label{eqn:l2-l1}
\widehat G_n(x)\sim n\widetilde G(x),\quad  \widetilde G(x)
\sim \ol{G}(x) \sum_1^K c_k \dd{E}N_k,  
\end{align}
In particular, the distribution of $X_{1,n}^0$ is ${{\sr{IRV}}}$, too. 
\end{lemma}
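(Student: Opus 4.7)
The proof hinges on the sample-path sandwich
\[
\max_{1\le j\le n}\widetilde{S}_{j}\ \le\ X_{1,n}^{0}\ \le\ \sum_{j=1}^{n}\widetilde{S}_{j}=S_{1,n}\qquad\text{a.s.}
\]
The right inequality is \eqref{eq:finite}. The left one holds because, in the saturated group-$n$ model, customer $j$ is present from time $0$, and its total service $\widetilde{S}_{j}$ is received in non-overlapping slots (a customer can be in service at only one station at a time), so its departure time is at least $\widetilde{S}_{j}$ and at most $X_{1,n}^{0}$ by definition. Once this sandwich is in place, the lemma reduces to two standard applications of the Principle of a Single Big Jump: one for the iid sum $S_{1,n}$, and one for the random sum $\widetilde{S}_{1}=\sum_{k=1}^{K}\sum_{i=1}^{N_{1,k}}\sigma_{1,k,i}$.

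\textbf{Step 1: tail of $\widetilde{G}$.} Write $\widetilde{S}_{1}=\sum_{k=1}^{K}T_{k}$ with $T_{k}:=\sum_{i=1}^{N_{1,k}}\sigma_{1,k,i}$. Since $N_{1,k}$ has exponential moments by \eqref{eq:exp}, the $\sigma_{1,k,i}$ are iid with tail $\sim c_{k}\overline{G}(x)$ by \eqref{eq:ma}, and $\overline{G}\in\sr{IRV}$, the random-sum asymptotic of Appendix~B gives $\mathbb{P}(T_{k}>x)\sim c_{k}\mathbb{E}N_{k}\,\overline{G}(x)$, together with the PSBJ $\{T_{k}>x\}\simeq\bigcup_{i}\{\sigma_{1,k,i}>x,\,N_{1,k}\ge i,\,T_{k}-\sigma_{1,k,i}\le h(x)\}$. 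Each $T_{k}$ then has an $\sr{IRV}$ (hence subexponential) tail, so their independent sum inherits the additive tail, yielding
\[
\widetilde{G}(x)\sim\overline{G}(x)\sum_{k=1}^{K}c_{k}\mathbb{E}N_{k},
\]
and $\widetilde{G}\in\sr{IRV}$ since $\sr{IRV}$ is preserved under asymptotic equivalence (Appendix~A).

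\textbf{Step 2: tail and PSBJ for $X_{1,n}^{0}$.} The sandwich yields
\[
\mathbb{P}\bigl(\max_{j\le n}\widetilde{S}_{j}>x\bigr)\ \le\ \widehat{G}_{n}(x)\ \le\ \mathbb{P}(S_{1,n}>x).
\]
The left side equals $1-(1-\widetilde{G}(x))^{n}\sim n\widetilde{G}(x)$ by independence, and the right side is $\sim n\widetilde{G}(x)$ by the standard iid-subexponential-sum PSBJ (Appendix~A), proving $\widehat{G}_{n}(x)\sim n\widetilde{G}(x)$. The same PSBJ for $S_{1,n}$ gives
\[
\{S_{1,n}>x\}\ \simeq\ \bigcup_{j=1}^{n}\bigl\{\widetilde{S}_{j}>x,\ S_{1,n}-\widetilde{S}_{j}\le h(x)\bigr\},
\]
for any $h(x)\to\infty$ with $h(x)=o(x)$. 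Since $\bigcup_{j}\{\widetilde{S}_{j}>x\}\subseteq\{X_{1,n}^{0}>x\}\subseteq\{S_{1,n}>x\}$ and all three sets share probability $\sim n\widetilde{G}(x)$, the first two set-equivalences in \eqref{updown} follow. Substituting the random-sum PSBJ description of each $\{\widetilde{S}_{j}>x\}$ from Step~1 produces the last equivalence with $\{\max_{j,k,i\le N_{j,k}}\sigma_{j,k,i}>x\}$.

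\textbf{Main obstacle.} The only mildly subtle point is upgrading probability-level asymptotics to set-level equivalences across the sandwich; the decisive bound
\[
\mathbb{P}\bigl(X_{1,n}^{0}>x,\ \max_{j}\widetilde{S}_{j}\le x\bigr)\ \le\ \mathbb{P}\bigl(S_{1,n}>x,\ \max_{j}\widetilde{S}_{j}\le x\bigr)=o(n\widetilde{G}(x))
\]
follows directly from PSBJ for iid subexponential sums, after which everything reduces to bookkeeping with the Appendix~A--B tail calculus.
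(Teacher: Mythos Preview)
Your overall strategy matches the paper's, but the sample-path lower bound you invoke is false. You claim $\max_{1\le j\le n}\widetilde S_j\le X_{1,n}^0$ a.s., arguing that ``customer $j$'s total service $\widetilde S_j$ is received in non-overlapping slots'' in the saturated group-$n$ model. This is not correct: $\widetilde S_j$ is defined in the \emph{isolation} (group-$1$) model, and by Remarks~\ref{REM} and~\ref{REM11} the service durations (and routing) experienced by customer $j$ in the group-$n$ model are in general different---the service times are server-associated, not customer-associated. A two-station, two-customer example already breaks the inequality: let customer~1 visit station~1 then station~2, customer~2 visit only station~2, with $\sigma_{1,1}>\sigma_{2,1}>\sigma_{2,2}$. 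In isolation $\widetilde S_1=\sigma_{1,1}+\sigma_{2,1}$, whereas in the group-$2$ model customer~2 takes $\sigma_{2,1}$ and customer~1 takes $\sigma_{2,2}$ at station~2, giving $X_{1,2}^0=\sigma_{1,1}+\sigma_{2,2}<\widetilde S_1$.

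The correct sandwich is the one in~\eqref{2b}: $\max_k S_{1,n}^{(k)}\le X_{1,n}^0\le S_{1,n}$, where the lower bound holds because server $k$ processes customers sequentially and, by the invariance property~(IP), its total workload $S_{1,n}^{(k)}$ does not depend on the order of arrivals. Since $S_{1,n}=\sum_k S_{1,n}^{(k)}$ and the PSBJ forces both $\{S_{1,n}>x\}$ and $\{\max_k S_{1,n}^{(k)}>x\}$ to have probability $\sim n\widetilde G(x)$, you recover $\{X_{1,n}^0>x\}\simeq\{S_{1,n}>x\}$ and the rest of your Steps~1--2 go through unchanged. (One further small point: the $T_k=\sum_{i\le N_{1,k}}\sigma_{1,k,i}$ are not independent across $k$, since the $N_{1,k}$ are correlated through the routing chain; the tail asymptotic for $\widetilde S_1$ still holds by conditioning on $(N_{1,1},\dots,N_{1,K})$ and using Kesten's bound, which is how the paper argues.)
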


\begin{proof} The results follow from Lemma 8 in the Appendix of
\cite{BaccFoss2004}. For the sake of completeness, we recall here the reasoning. By \eq{exp}, $N_{n,k}$ has a light-tailed distribution. Moreover, it is independent of the $\{\sigma_{n,k,i}\}_{i\geq 1}$. 
By the sub-exponentiality of $\sigma_{n,k,i}$ 
and by the Kesten's bound (see, e.g., Theorem 3.34 in \cite{FKZ}), one can get  that, for any $k=1,\ldots,K$ with $c_k>0$, and for $n\ge 1$, 
\begin{align*}
\{ \sum_{i=1}^{N_{n,k}} \sigma_{n,k,i} >x\} \simeq
\{\max_{1\le i \le N_{n,k}} \sigma_{n,k,i}>x \}.
\end{align*}
The above display implies that, in our conditions, for any $k$ with $c_k>0$, the random variable $ \sum_{i=1}^{N_{n,k}} \sigma_{n,k,i}$ 
has an ${{\sr{IRV}}}$ distribution with the tail asymptotically equivalent to
$c_k\dd{E}N_k \ol{G}(x)$.
Then one uses the property of ${{\sr{IRV}}}$ distributions (that the sum of independent ${{\sr{IRV}}}$ random variables is again ${{\sr{IRV}}}$) to conclude that, for any $n$, 
\begin{align*}
\{\widetilde{S}_n>x\} \equiv
\left\{\sum_{k=1}^K \sum_{i=1}^{N_{n,k}} \sigma_{n,k,i} >x\right\} 
\simeq
\{\max_{1\le k \le K} \max_{1\le i\le N_{n,k}} \sigma_{n,k,i} >x\} 
\end{align*}
and, therefore, the second equivalence in \eqn{l2-l1} follows. Since $\max_{k} S^{(k)}_{1,n} \le X^{0}_{1,n} \le S_{1,n}$, see  (\ref{2b}), we get
\begin{align*}
  \{X_{1,n}^0>x\} & \simeq \{S_{1,n} > x\}  \simeq \left\{\max_{k} S^{(k)}_{1,n} > x \right\}.
\end{align*}
Hence, $S_{1,n}=\sum_{i=1}^n\widetilde{S}_i$ yields the first equivalence in \eqn{l2-l1} and the first ``$\simeq$'' in \eqref{updown}, whilst the second ``$\simeq$'' is obtained by \eq{newPSBJ2} of \lem{TARS} for $\gamma = n$.
\end{proof}

Now we are ready to formulate the PSBJ for the random variale $U$. Recall from \eq{Hj} the definition of random variables $H_j$. 
Based on 
Lemma \ref{BF2004} for $n=L$ and applying the result of
Proposition \ref{prop10} from Appendix \ref{sec:refined} for $\widehat{\sigma}$'s in place of 
$\sigma$'s,  $\widehat{t}$'s in place of $t$'s, $\widehat{\nu}$ in place of $\tau$, and $\widehat{\rho} := {\mathbb E} \widehat{t}_1/{\mathbb E} \widehat{\sigma}_1$, we may conclude that
\begin{align*}
\{H_1>x\}\equiv \{\widehat{B}_1>x/(K+1)\} 
\simeq 
\bigcup_{i\ge 1} \{\widehat{\nu}_1\ge i, \widehat{\sigma}_i > C_Ux \}
\end{align*}
and
\begin{align*}
{\mathbb P} (H_1>x) \sim L{\mathbb E} \widehat{\nu}_1 \ol{G}(C_Ux)\sum_{k=1}^{K} c_k {\mathbb E} N_k
\end{align*}
where \begin{align}\label{eq:c}C_U :=  (1-\widehat{\rho})/(K+1).\end{align}
 Recall that random variables $\{H_j\}_{j\geq 1}$ are $i.i.d.$ and that the number $J$ is a stopping time with respect to an extended filtration and has a light-tailed distribution, see \eqref{JLT}. Therefore, we can use 
 \lem{TARS} from Appendix \ref{auxib} (together with Remark \ref{rem:a1}), with 
$J$ in place of $\gamma$ and $H$
 in place of $\sigma$) ,
to conclude that
\begin{align*}
\{U>x\} & \simeq
\bigcup_{n=1}^J
\{H_n>x, \sum_{i=1}^{n-1} H_i \le h(x)\} \\
&\simeq
\bigcup_{i=1}^{\widehat{\cal N}_J}
\{\widehat{\sigma}_i> C_Ux, \sum_{\ell=1}^{i-1} 
\widehat{\sigma}_{\ell} \le h(x)\}.
\end{align*}
Here $h(x)=o(x)$ is such that $h(x)\to\infty$ as $x\to\infty$. 
Then, using Lemma \ref{BF2004} and an argument similar to that in \rem{a1}, we may continue, to obtain the following equivalence:
\begin{align}
\label{eq:Ueq}
\{U>x\} 
& \simeq \bigcup_{n=1}^\infty \Lambda_n(x):=
\bigcup_{n=1}^{\infty}
\{n\leq \widehat{\cal N}_JL, \widetilde{S}_n > C_Ux, \sum_{i=1}^{n-1} 
\widetilde{S}_i \le h(x) \}.
\end{align}
This is the PSBJ for $U$, and
the right-hand side of \eq{Ueq} is the union of events $Q_n$ that was mentioned earlier 
in Section \ref{Sec3.4}.

Further,  since the probability of the union of events on the right-hand side of \eq{Ueq} is not bigger than the sum of their probabilities and since
\begin{align*}
\{ \widetilde{S}_n > C_Ux, \sum_{i=1}^{n-1} 
\widetilde{S}_i \le h(x) \}  \subseteq 
\{\widetilde{S}_n > C_Ux \},
\end{align*}
we may conclude that by the Wald identity
\begin{align}\label{Ueq2}
{\mathbb P} (U>x) \leq (1+o(1))\mathbb E[\widehat{\cal N}_JL]\mathbb P(\widetilde S_1>C_Ux)=(1+o(1)) 
L{\mathbb E} J {\mathbb E} \widehat{\cal N}_1
{\mathbb P} (\widetilde{S}_1>C_Ux).
\end{align}

 \subsection{PSBJ for the busy period in GJN}
 \label{sec:PSBJB} 

 Since $B\leq U$ a.s., we get
 \begin{align*}
 \{B>x\} = \{B>x, U>x\}.
 \end{align*}
 It follows from   \eq{blower} and \eqref{Ueq2} and also from the properties of intermediately varying distributions that  
 the probabilities ${\mathbb P}(B>x)$ and ${\mathbb P}(U>x)$ are of the same order:  
 \begin{align*}
 \liminf_{x\to\infty}
 \frac{{\mathbb P} (B>x)}{{\mathbb P}(U>x)} >0.
 \end{align*}

Therefore (recall the notation $C_U=  (1-\widehat{\rho})/(K+1)$ and \eqref{eq:Ueq}),
\begin{align}\label{eq:btonub}
\begin{split}
\{B>x\} &\simeq \bigcup_{n=1}^\infty\{B>x\}\cap \Lambda_n(x)\\
&= \bigcup_{n=1}^\infty\{B>x, n\leq \nu_B\}\cap \Lambda_n(x)\\
&= 
\bigcup_{n=1}^{\nu_B}
\{ B>x, \widetilde{S}_n > C_Ux, \sum_{i=1}^{n-1} 
\widetilde{S}_i \le h(x) \}\\
&  \equiv 
\bigcup_{n=1}^{\infty} 
\{ B>x, \nu_B\ge n, \widetilde{S}_n > C_Ux, \sum_{i=1}^{n-1} 
\widetilde{S}_i \le h(x) \}.
\end{split}
\end{align}
Here the expressions in the second and the third lines are equal for all $x$ sufficiently
large since $B \leq \sum_{i=1}^{\nu_B} \widetilde{S}_i$, $\nu_B\leq \widehat {\cal N}_JL$, $h(x) < x$ for $x$ large, and, therefore, the events
\begin{align*}
 \{ B>x, \widetilde{S}_n > C_Ux, \sum_{i=1}^{n-1} 
\widetilde{S}_i \le h(x) \}
\end{align*}
are empty, for any $n > \nu_B$. 

Further, by Lemma \ref{BF2004} and its proof,  
\begin{align}\label{eq:utob}
\{B>x\} 
\simeq
\bigcup_{n\ge 1}
\{B>x, \nu_B\ge n\} \cap E_n(x, C_U),
\end{align}
see \eq{unionen} for $E_n(x, C_U)$. 
Thus, we have obtained 
\eq{PSBJ6}, that is the Principle of a Single Big Jump for the (first) busy period in the GJN.

We conclude this section with a number of comments that will be used in the next section.

Recall the notation $E_{n,k,i}(x,u_k)$ from \eq{PSBJY2} 
and
introduce the events 
\begin{align*}
\widehat{E}_{n,k,i}(x, C_U) \equiv \widehat{E}_{n,k,i}= 
\{N_{n,k}\ge i, \sigma_{n,k,i}>C_Ux\} \supseteq E_{n,k,i}(x,C_U).
\end{align*}
Then fix $n_0\ge 1$ and $i_0\ge 1$ and let
\begin{align*}
\Upsilon (n_0, i_0) = 
\bigcup_{n,k,i}^{(1)} E_{n,k,i}(x,C_U) \cap \{\nu_B\geq n\}
\end{align*}
where $\bigcup_{n,k,i}^{(1)}$ is the union over all triples $(n,k,i)$ such that $1\le k \le K, 1\le n \le n_0, 1\le i \le i_0$, 
and 
\begin{align*}
\Xi (n_0,i_0)  = 
\bigcup_{n,k,i}^{(2)}\widehat{E}_{n,k,i}(x,C_U) \cap \{\nu_B\geq n\}
\end{align*}
where $\bigcup_{n,k,i}^{(2)}$ is the union over all triples $(n,k,i)$ such that $1\le k  \le K$ and either $1\le i <\infty$ and $n>n_0$ or $1\le n < \infty$ and $i>i_0$. 
Then, using \eq{utob}, 
\begin{align}\label{eq:uplowB}
\Big(\{B>x\}\cap\Upsilon (n_0,i_0)\Big)
\stackrel{\sim}{\subset}
\{B>x\} 
\stackrel{\sim}{\subset}
\Big(\left(\{B>x\}\cap\Upsilon (n_0,i_0)\right) \cup \left(\{B>x\}\cap\Xi (n_0,i_0)\right)\Big).
\end{align}
Further,
\begin{align*}
{\mathbb P} (\Xi (n_0,i_0))
&\leq
\left(\sum_{n>n_0}\sum_{k=1}^K\sum_{i\ge 1} +
\sum_{n=1}^{n_0} \sum_{k=1}^K \sum_{i>i_0} \right)
{\mathbb P} (\nu_B\ge n) {\mathbb P} (N_{1,k}\ge i)
{\mathbb P}(\sigma_{1,k,1}>x)\\
&\leq (1+o(1))
\overline{G}(C_Ux)\left[{\mathbb E} (\nu_B, \nu_B >n)\sum_{k=1}^K c_k {\mathbb E} N_{1,k} + {\mathbb E} \nu_B \sum_{k=1}^K c_k {\mathbb E} (N_{1,k}, N_{1,k}>i_0)\right].
\end{align*}
Since ${\mathbb E} \nu_B$ and ${\mathbb E} N_k$, $k=1,\ldots,K$ are all finite, for any $\varepsilon >0$, one can choose $n_0$ and $i_0$ so large that the sum in the brackets becomes smaller than $\varepsilon$. Since any ${\sr{IRV}}$ distribution is dominated varying (see Appendix A), one may conclude that, for any $\varepsilon >0$, 
one can choose $n_0$ and $i_0$ so large that the probability ${\mathbb P} (\Xi (n_0,i_0))$ becomes asymptotically smaller than $\varepsilon \ol{G}(x)$. Taking into account \eq{blower} and \eq{uplowB}, we conclude that 
for any $\varepsilon>0,$ if we choose $n_0,i_0$ large enough, then for large $x$,  we have 
\begin{align}\label{eq:cutshort}
\begin{split}
\mathbb P(B>x)\leq (1+\varepsilon)\mathbb P(\{B>x\}\cap\Upsilon (n_0,i_0)),
\end{split}
\end{align}
where we recall 
\begin{align}\label{eq:shortpart}
&\{B>x\}\cap\Upsilon (n_0,i_0)=
\bigcup_{n,k,i}^{(1)} \{B>x, \nu_B\geq n\}\cap E_{n,k,i}(x,C_U).
\end{align}
In Section \ref{sec:finduk}, we will use the above two displays to prove \eqref{eq:PSBJX}. This will lead to the completion of the proof of Theorem \ref{th1}.

\section{Completion of the proof of Theorem \ref{th1} based on fluid limits}\label{sec:finduk}

\subsection{Verification of \eq{PSBJX}}
\label{sec:fluid}
\setnewcounter

For any $c>0,$ define 
\[E_{n,k,i}(x,=c):=\left\{N_{n,k}\geq i,
\sum_{j=1}^{n-1}\widetilde{S}_j\leq h(x), 
 \widetilde{S}_n - \sigma_{n,k,i} \le h(x), 
 \sigma_{n,k,i} =cx \right\}.\]
 We will show that there exists $x_{k,c}\xrightarrow[]{x\to\infty}\infty$ such that $0<\overline x_{k,c}:=\lim_{x\to\infty}\frac{x_{k,c}}{x}$ and 
\begin{align}
\label{eq:toproveinSec6.1}
&\mathbb P\left( \left. \left|\frac{B}{x_{k,c}}-1\right|\geq \varepsilon\, \right|\, \nu_B\geq n, E_{n,k,i}(x,=c) \right)\xrightarrow{x\to\infty}0,\quad \forall \varepsilon>0.
\end{align}
Moreover, we will show that $\overline x_{k,c}$ is a strictly increasing and linear function of $c$ (see \eqref{eq:barxkclinear}).  Then we obtain that 
\begin{align}\label{eq:ctoxkc}\begin{split}&\{B>x_{k,c}, \nu_B\geq n\}\cap E_{n,k,i}(x,c)\simeq \{\nu_B\geq n\}\cap E_{n,k,i}(x,c).
\end{split}\end{align}
Indeed, the above is due to {\bf MP2} and the definition of $E_{n,k,i}(x,c)$, as they, together with \eqref{eq:toproveinSec6.1}, entail that 
\[\mathbb P(B>x_{k,c}\,|\, \{\nu_B\geq n\}\cap E_{n,k,i}(x,c))\xrightarrow[]{x\to\infty}1.\]

Since $\overline x_{k,c}$ is a strictly increasing and linear function of $c$ and given \eq{utob}, we can conclude that 
\[1\geq \overline x_{k,C_U}, \quad \forall k \in [K]. \]
The above display allows us to find $u_k\geq C_U$
such that 
\[
1=\overline x_{k,u_k}, \quad \forall k.
\]
Then, using \eq{toproveinSec6.1} and \eq{ctoxkc},  we obtain that (in the same spirit of obtaining \eqref{eq:ctoxkc})
\begin{align}\label{eq:removeB>x}\begin{split}
&\{B>x, \nu_B\geq n\}\cap E_{n,k,i}(x,C_U)\simeq\{B>x, \nu_B\geq n\}\cap E_{n,k,i}(x,u_k)\simeq \{\nu_B\geq n\}\cap E_{n,k,i}(x,u_k).
\end{split}\end{align}
Taking into account \eq{cutshort} and \eq{shortpart} and  the fact
\begin{align}\label{eq:disjoint}
\text{events} \  E_{n,k,i}(x,c)  \text{ are disjoint for different collections of triples} \ n,k,i,
\end{align} 
we arrive at \eq{PSBJX}.  To complete the proof of \eq{PSBJX},  we just need to prove \eq{toproveinSec6.1} and the existence of $x_{k,c}$ and  $\overline x_{k,c}$ being a strictly increasing and linear function of $c$. 
This will be achieved by the fluid approximation, as shown below. 

Take any fixed $n, i$ and $k$ with $c_k>0$.
Recall random variable $\sigma_k$ has an {$\sr{IRV}$} distribution. 
We rely on the overview on fluid limits presented in Appendix E.

We call the service with service time $\sigma_{n,k,i}$ a {\it special service}.  To simplify the notation, assume that the first exogeneous customer arrives at time $T_1=0$. 
We are conditioning on values of $\widetilde{S}_j, j<n$, 
$\widetilde{S}_n-\sigma_{n,k,i}$ and $T_j, j\le n$ such that $\nu_B\ge n$ and the event $E_{n,k,i}(x,=c)$ takes place, for some $c>0$. Given that, the special service starts at some time $T^{(S)}$ (here $S$ for special) that occurs after time $0$ and before time $2h(x)$. During $2h(x)$ units of time, a random number of customers, say $L$, do arrive where
$L$ is of order at most $h(x)$ with high probability, i.e. 
\begin{align*}
{\mathbb P} (L \le L_0h(x))\to 1 \ \ \text{as} \ x\to\infty,
\end{align*}
with $L_0=2(1+\varepsilon)/a$, and the residual inter-arrival and residual service times at all stations but $k$ are of order $h(x)$ with high probability, too -- thanks to the basic renewal theory.

For simplicity, we condition on the value, say $x$, of the special service time (i.e.\ $c=1$ in $E_{n,k,i}(x,=c)$). We may view this full service time as a ``residual service time''. Then we are exactly in the setting of Subsection E.2 and may conclude that
\begin{align*}
|Z(\widetilde{T})|/x \to 0 \ \text{in probability},
\end{align*}
where $Z(\cdot)$ is defined in \eqref{eq:zt}, $\widetilde{T} = T^{(S)}+ \tau^{(k)}x$ and $\tau^{(k)}$ is given in \eq{tauk}, 
and this implies that, for some function $\widetilde{h}(x)$ that is an $o(x)$ function,
\begin{align*}
{\mathbb P}(|Z(\widetilde{T})|\le \widetilde{h}(x))\to 1, \ \ \text{as} \ x\to\infty.
\end{align*}

Now we are in the setting of \lem{laststep} with $y=\widetilde{h}(x)$ and may conclude that,  
\begin{align*}
{\mathbb P}(B\in (\tau^{(k)}x- \widehat{h}(x), \tau^{(k)}x+\widehat{h}(x))) \to 1, \ \text{as} \ x\to\infty,
\end{align*}
for some function $\widehat{h}(x)=o(x)$. Then  recalling $x_{k,1}$ in \eq{toproveinSec6.1}, we can set
\[x_{k,1}=\tau^{(k)}x.\]
This entails $\overline x_{k,1}=\tau^{(k)}$. Recall that we have set $c$=1. For general $c$,  
the corresponding term for $\tau^{(k)}$ will be $\overline x_{k,c}=c\tau^{(k)}$, since the fluid model is linear. 
Therefore, we have proved the existence of $x_{k,c}$ and 
\begin{equation}\label{eq:barxkclinear}\overline x_{k,c}=c\tau^{(k)},\end{equation}
which is strictly increasing and linear on $c$, and \eq{toproveinSec6.1} is proved. 
This leads to $u_k$ which solves the equation: 
\begin{align}\label{eq:def-uk}
\overline x_{k,u_k}=1, \ \ \text{therefore} \ \  u_k =1/\tau^{(k)}.
\end{align}
We can conclude now that the proof for Theorem \ref{th1} is completed with $u_k$ given above.  However, equality \eq{def-uk} determines $u_k$ implicitly, and in the next subsection we provide an algorithm for finding its value.

\subsection{The algorithm for determining $u_k$}\label{sec:algo}

Consider the fluid model described in Appendix \ref{app:pc},. 
The dynamics therein starts (at time $0$) with all servers empty, and server $k$ ``frozen'' for the time duration 1 (in the sense that server $k$ has input but no output of fluid), which represents an unusually long service time  
in the original model, and $\tau^{(k)}$ is the total time from time $0$  until the fluid model becomes empty. 
The time period $[0,\tau^{(k)})$ is naturally split into two time periods: the first period is $[0,1)$ when the server $k$ is frozen, and the second period, between time instants $1$ and $\tau^{(k)}$, when all stations are operating.

We will use arrival and departure rates to/from all servers to describe the dynamics in both periods. Recall $[K] = \{1,2,\ldots,K\}$. 

Let us start with the first period. Recall the notation:
$\overline a_j^{(k)}(t)$ (resp.\ $\overline d_j^{(k)}(t)$) is  the arrival (resp.\ departure) rate to/from station $j$ for any $t\geq 0$. Note that $\overline a_j^{(k)}(t), \overline d_j^{(k)}(t)$ for $j=1,2,\ldots, K$ are constants within the period and satisfy the equations \eq{a=d} in \app{fluid-limit}, which has a unique solution.  Using \eq{ratek}, we know that at time instant $1$ all servers are empty except server $k$ that  has fluid volume of size 
$\frac{\beta_k}{a}$ where $\beta_k={\mathbb P} (N_k>0)>0$.

From time $1$, the model enters the second period. For its analysis, we split the period into two consecutive time intervals.

\noindent{Interval 1:} This time interval starts at time $1$ and ends when server $k$ becomes empty from fluid. In this time interval, there are some servers that increase linearly their fluid volumes at constant rates, and others remain empty (except server $k$ which decreases its volume linearly).  

\noindent{Interval 2:} It starts when server $k$ becomes empty and contains at most $K-1$ subintervals, and each of them starts  when one or more non-empty servers become(s) empty simultaneously (and will remain empty after that). We determine the increasing or decreasing rates for all the servers with positive fluid levels during each such time subinterval. 

We analyse now  

\noindent{\bf Interval 1.}  
We provide an algorithm for finding the set of servers that stay free of fluid during this time interval. 

{\bf Description of the algorithm.} We start with the guideline as follows, before proceeding to more details. 
 We use $\mathcal A$ to denote the set of servers that stay free of fluid. 
The algorithm finds $\mathcal A$ using induction arguments. Here is the scheme of induction. We start with $\mathcal A_0 =\emptyset$ at the initial step $0$ and, for any induction step $r$, given $\mathcal A_r$, we determine a set $\widehat{\cal A}_r \subset [K]\setminus {\mathcal A}_r$, where recall $[K] = \{1,2,\ldots,K\}$.  If
$ \widehat{\cal A}_r  \neq \emptyset$, then we let
$\mathcal A_{r+1} = \mathcal{A}_r \cup \widehat{\cal A}_r $ and go to step $r+1$. Otherwise, if $ \widehat{\cal A}_r$ is empty, we let $\mathcal A = \mathcal{A}_r$ and stop the algorithm.

Now we describe in detail a general step $r$ of the algorithm. We know that during this time interval the server $k$ decreases its fluid level, and the departure rate is at the maximal level $1/b_k$.   Assume that
the set of  servers that stay free of fluid is $\mathcal{A}_r$.  
Under this assumption, 
for any server $j\in{\mathcal A}_r$, the arrival $\overline{a}_j^{(k)}(1)$ and the departure $\overline{d}_j^{(k)}(1)$ rates must be equal,  
 whereas 
for any server $\ell\in [K]\setminus ( {\mathcal A}_r\cup \{k\})$, the arrival rate must be strictly larger than the departure rate which takes the maximal value $1/b_{\ell}$. For simplicity,  we use $X_j$ to denote the arrival rate of server $j$. Then the following inequalities should hold: 
\begin{equation}\label{eqn:part1}\forall j\in \mathcal A_r,\quad p_{0,j}/a+\sum_{\ell \in [K]\backslash \mathcal{A}_r}p_{\ell,j}/b_{\ell}+\sum_{s\in 
\mathcal{A}_r}p_{s,j}X_s=X_j;\end{equation}
\begin{equation}
\label{eqn:part2}
 \forall \ell  \in[K]\setminus ( {\mathcal A}_r\cup \{k\}),\quad p_{0,\ell}/a+\sum_{s\in [K]\backslash \mathcal A_r } p_{s,\ell}/b_{s}+
 \sum_{j\in \mathcal{A}_r}p_{j,\ell}X_j>1/b_{\ell}.
 \end{equation}
Note that \eqref{eqn:part1} uniquely determines $\{X_i\}_{i\in \mathcal A_r}$. Then we define $\widehat {\mathcal A}_r$ as 
\[\widehat {\mathcal A}_r:=\Big\{\ell\in 
[K]\backslash(\mathcal{A}_r\cup\{k\}): \text{the inequality in \eqref{eqn:part2} does not hold}\Big\}.\]
One can see that
\begin{itemize}
\item If $\widehat{\mathcal{A}}_r=\emptyset$, then there is no contradiction, which means that all servers in ${\mathcal A}_r$ remain free of fluid, and all servers in
$[K]\backslash\{{\mathcal A}_r\cup \{k\}\}$ increase their fluid volumes.  
\item If $\widehat{\mathcal{A}}_r\neq \emptyset$, then we get a contradiction, which means that not all servers in $[K]\backslash\{{\mathcal A}_r\cup\{k\}\}$ increase their fluid volumes. Therefore, using the monotonicity argument, we may conclude that all servers  in ${ {\mathcal A}_r}\cup \widehat {\mathcal A}_r$ must stay free of fluid. Therefore,  we set 
${\mathcal A}_{r+1}={\mathcal A}_r\cup \widehat {\mathcal A}_r$ and turn to step $r+1$.
\end{itemize}

Denote the duration of the time interval 1 by $\mathcal T_1$.
Since the initial volume of fluid at server $k$ is $\beta_k/a$, the arrival rate is $\overline{a}_k^{(k)}(1)$ 
and the departure rate is $1/b_k$, we get 
\begin{align}\label{eq:t1}\mathcal T_1 =\frac{\beta_k/a}{1/b_k-\overline{a}_k^{(k)}(1)}.\end{align}
Note that $\overline{a}_k^{(k)}(1)$ can be obtained by solving \eqn{part1} and \eqn{part2} with $\mathcal A_r=\mathcal A$, and then $\overline{a}_k^{(k)}(1)=X_k.$

Now we consider {\bf Interval 2} of the second period, that starts at time 
$1+\mathcal T_1$. This interval can be decomposed into several subintervals, see (p10) 
 in Proposition \ref{propE3}, each of which is an interval between two consecutive events where one or more non-empty servers become empty.  To find the increase/decrease rates of fluid at any station in any of the subintervals and their lengths, we follow the property (p13) from Proposition \ref{propE3}. Denoting the length of time interval 2, which is the sum of the lengths of all subintervals, by $\mathcal T_2$, we may conclude that 
the total time to empty the fluid model is 
\begin{align}\label{eq:tauk}\begin{split}\tau^{(k)}&=1+\text{time interval 1 length}+\text{time interval 2 length}\\
&=1+\mathcal T_1+\mathcal T_2=1+\frac{\beta_k/a}{1/b_k-\overline a_k^{(k)}(1)}+\mathcal T_2.\end{split}\end{align}

In the next subsection, we consider an example with two servers where the values of $\tau^{(k)}$, $k=1,2$ may be found explicitly.

\vspace{0.3cm}
\subsection{Example of 2-server network}
\label{sec:example}

We consider a particular case of two stations.
To simplify the formulae, we may assume that 
$p_{k,k}=0$, for both
$k=1,2$ (if this is not the case, we may introduce new service times that
are geometric sums of original service times in the original model).

Here the stability condition \eq{stab} consists of
\begin{align}\label{eq:server1}
(p_{0,1}+p_{0,2}p_{2,1})b_1 <a (1 - p_{1,2} p_{2,1})
\end{align}
and
\begin{align}\label{eq:server2}
(p_{0,2}+p_{01}p_{1,2})b_2 <a (1 - p_{1,2} p_{2,1}).
\end{align}
As before, we proceed with the analysis at the fluid level, but more informally.

By the symmetry, it is enough to consider only   the case where the first server is frozen for the unit time. 
There are two time periods. The first period is $[0,1)$ when server 1 has no output. At time $1$, server 2 is empty and server 1 has fluid volume 
\[ 
\frac{\beta_1}{a}=\frac{p_{0,1}+p_{0,2}p_{2,1}}{a},
\] 
where 
$\beta_1=\mathbb P(N_1>0)=p_{0,1}+p_{0,2}p_{2,1}.$

The second period is from time $1$ until the moment when the fluid model becomes empty. It includes two time intervals, with respective time lengths $\mathcal T_1$ and $\mathcal T_2.$ Note that the arrival rate to server 2 within interval 1 is  
\[\overline a_2^{(1)}(1)=\frac{p_{0,2}}{a}+\frac{p_{1,2}}{b_1}.\] 
Since we have only two servers, we can analyse directly two simple cases (without referring to the general algorithm from the previous subsection)

\begin{enumerate}
\item  (Server 2 remains free of fluid during time interval 1). If $1/b_2$, the departure rate from server 2, is
at least as big as the arrival rate, 
\begin{align}\label{eq:v}
1/b_2 \ge \overline a_2^{(1)}(1),
\end{align}
then
server 2 remains empty and server 1 receives fluid 
at rate
\begin{align*}
\overline a_1^{(1)}(1)=\frac{p_{0,1}}{a} + \overline a_2^{(1)}(1)p_{2,1},
\end{align*}
which is smaller than the departure rate $1/b_1$ from server  1, by the stability condition \eq{server1}. 
Therefore the fluid level at server 1 decreases at rate
\begin{align*}
\beta_{1,1} := \overline d_1^{(1)}(1)-\overline a_1^{(1)}(1)=  \frac{1}{b_1}-\overline a_1^{(1)}(1)= \frac{1}{b_1} - \frac{p_{0,1}}{a} - \left(\frac{p_{0,2}}{a}+\frac{p_{1,2}}{b_1}\right)p_{2,1}
 > 0, 
\end{align*}
and then 
\[
\mathcal T_1=\frac{\beta_1/a}{1/b_1-\overline a_1^{(1)}(1)}=\frac{\frac{p_{0,1}+p_{0,2}p_{2,1}}{a}}{\frac{1}{b_1}-\frac{p_{0,1}}{a}-(\frac{p_{0,1}}{a}+\frac{p_{1,2}}{b_1})p_{2,1}}.
\]
In this case, at time $1+\mathcal T_1$, both servers are empty, thus $\mathcal T_2=0$. Therefore, 
\[
\tau^{(1)}=1+\mathcal T_1=1+\frac{\frac{p_{0,1}+p_{0,2}p_{2,1}}{a}}{\frac{1}{b_1}-\frac{p_{0,1}}{a}-(\frac{p_{0,1}}{a}+\frac{p_{1,2}}{b_1})p_{2,1}}.
\]

\item (Server 2 increases its fluid level during time interval 1). Assume now that 
\begin{align}\label{eq:vv}
1/b_2  <  p_{0,2}/a + p_{1,2}/b_1.
\end{align}
Then, starting from time $1$, the fluid level at server 2 grows with rate
\begin{align*}
\gamma_{2,1} :=\overline  a_{2}^{(1)}(1)-\overline d_{2}^{(1)}(1)=p_{0,2}/a + p_{1,2}/b_1 - 1/b_2,
\end{align*}
and server 1 receives fluid at rate
\begin{align*}
\overline a_1^{(1)}(1)=p_{0,1}/a  + p_{2,1}/b_2,
\end{align*}
so 
\[\mathcal T_1=\frac{\beta_1/a}{1/b_1-\overline a_1^{(1)}(1)(1)}=\frac{\frac{p_{0,1}+p_{0,2}p_{2,1}}{a}}{\frac{1}{b_1}-\frac{p_{0,1}}{a}-\frac{p_{2,1}}{b_1}}.\]
Then the fluid level at server 2 at time $1+\mathcal T_1$ is 
\[\gamma_{2,1}\mathcal T_1.\]
Next we consider time interval 2 which starts at $1+\mathcal T_1$ and finishes when the fluid model becomes empty. 

Starting from time $1+\mathcal T_1$, server 1 stays empty, so the arrival
rate to server 2 is 
\begin{align*}
\overline a_2^{(1)}(1+\mathcal T_1)=p_{0,2}/a  +p_{1,2}\overline a_1^{(1)}(1+\mathcal T_1)=p_{0,2}/a  + p_{1,2}(p_{0,1}/a +p_{2,1}/b_2)
\end{align*}
(here we have used that $\overline a_1^{(1)}(1+\mathcal T_1)=\overline a_1^{(1)}(1)$), and the fluid level at server 2 decays at rate
\begin{align}\label{eq:gamma22}
\beta_{2,2}:= \frac{1}{b_2} -\overline a_2^{(1)}(1+\mathcal T_1) = \frac{1}{b_2} - \frac{p_{0,2}}{a} - 
\left(\frac{p_{0,1}}{a}+\frac{p_{2,1}}{b_2}\right)p_{1,2} >0.
\end{align}
Thus, starting from time $1+\mathcal T_1$, it takes time 
\begin{align}\label{eq:y3}
\frac{\gamma_{2,1}\mathcal T_1}{\beta_{2,2}}
\end{align}
to empty server 2. When server 2 is empty, the fluid model is empty.  Thus, 
\[\mathcal T_2=\frac{\gamma_{2,1}\mathcal T_1}{\beta_{2,2}}.\]
In conclusion,
\[\tau^{(1)}=1+\mathcal T_1+\mathcal T_2=1+(1+\frac{\gamma_{2,1}}{\beta_{2,2}})\mathcal T_1.\]
\end{enumerate}

\section{Prospective generalisations}
\label{sec:concluding}

In this paper, we consider two ways of presentation for the GJN's, via the server-based infrastructure $\Sigma_S$ or the  customer-based infrastructure $\Sigma_{C}$. In this section, we make several comments on possible generalisations of the stochastic assumptions on the model using the $\Sigma_C$ framework. 

For example, 
for each fixed $n$,  one may assume that the service times $\{\sigma_{n,k,i}; k \in [K], i \ge 1\}$ are dependent, but
the families are $i.i.d.$ in $n$. More generally, one may assume that the pairs of families
$\{(\sigma_{n,k,i}, \alpha_{n,k,i}); k \in [K], i \ge 1\}$ are $i.i.d.$ in $n=1,2,\ldots$, but there may be some dependence within each family. Some dependence between the duration of a service time and a further routing of the customer looks realistic in certain applications. 

It was shown in Baccelli and Foss \cite{BaccFoss1994} that the stability conditions stay the same in a much more general setting, where the pairs of families $\{(\sigma_{n,k,i}, \alpha_{n,k,i}); k \in [K], i \ge 1\}$ form a stationary and ergodic sequence in $n=1,2,\ldots$, and any dependence within each family is allowed. In particular, we expect that if one assumes the families to be $i.i.d.$ in $n$, then, under some  constraints on the dependence structure within a family, one can use the same approach for establishing a Big-Jump Principle (that the number of jumps may be bigger than one, in general) at the level of $\widetilde{S}_i$, and then proceed to the analysis of fluid models, with obtaining various tail asymptotics. It was provied in \cite{BaccFoss1994} that the monotonicity and the invariance properties continue to hold in the more general setting, and then one can construct the single-server upper-bound queue and, further, our construction in \sectn{ulbound} continues to work. 
Thus, one can establish similar results, under some relaxed assumptions on the dependence structure. 

Let us provide two particular examples.

\begin{example}
\label{exa:}
For any fixed $n \ge 1$ and $k \in [K]$,  $\sigma_{n,k,i} = \sigma_{n,k,1}$ a.s. for all $i \ge 1$. This means that, say, in the isolation model all service times of each customer at any fixed station are identical. Further, we may assume that $\{\sigma_{n,k,1}; n \ge 1, k \in [K]\}$ are $i.i.d.$ and independent of all $\alpha$'s.
\end{example}

\begin{example}
\label{exa:}
We may assume a pairwise dependence between a service time and the following outgoing link. 
Say, let $C>0$ be a number.
If $\sigma_{n,k,i} \le C$, then $\alpha_{n,k,i}$ may take any value from 1 to $K+1$, and if
$\sigma_{n,k,i} > C$, then $\alpha_{n,k,i} = K+1$.
(If a customer takes a very long service time, it has to leave the network).
\end{example}

The other direction of generalisation is to consider the heavy-tail asymptotics for the length of the first busy period when the heaviest service time distributions belong to the more general class of square-root insensitive distributions. 
Under certain further technical assumptions, the same scheme of the proof works for this class of distributions. 
\appendix

\section*{Appendix}
\label{sec:Appendix}
\setnewcounter

\section{Basic properties of ${\sr{IRV}}$ and related classes of distributions}\label{auxia}
\setcounter{section}{1}

\setnewcounter

As it follows from property \eqref{IRV-1}, any ${\sr{IRV}}$ distribution $G$
is
{\it long-tailed}, i.e.
 $\ol{G}(x+y) \sim \ol{G}(x)$, for any $y>0$. Further, any ${\sr{IRV}}$ distribution is
{\it subexponential}, i.e. 
 $\ol{G*G}(x) \sim 2\ol{G}(x)$. 

Also, any ${\sr{IRV}}$ distribution is {\it dominated-varying}, i.e. there exists a positive constant $c=c(G)$ such that $\ol{G}(2x) \geq c \ol{G}(x)$, for all $x>0$.
Since $\ol{G}(x)$ is a non-increasing function, this implies that the functions $\ol{G}(x)$ and $\ol{G}(dx)$ are of the same order, for any $d>0$.

{\it Regularly varying distributions} ($\sr{RV}$) form a subclass of $\sr{IRV}$ distributions. Recall that  $G\in \sr{RV}$ if, for some 
$\alpha >0$, 
\begin{align}
\label{eq:RV}
  \ol{G}(x) = x^{-\alpha} L(x),
\end{align}
where $L(x)$ is a {\it slowly varying} (at infinity) function, i.e. $L(cx)\sim L(x)$ as
$x\to\infty$, for any $c>0$.

If a random variable $X$ has an ${\sr{IRV}}$ distribution, then for any constant $C>0$, a random variable $CX$ has an ${\sr{IRV}}$ distribution, too. 

If a random variable $X$ has an ${\sr{IRV}}$ distribution $G$ and the distribution of a random variable $Y$ is such that ${\mathbb P} (|Y|>x) = o(\overline{G}(x))$, as $x\to\infty$, then
\begin{align}\label{eq:oG}
{\mathbb P} (X+Y >x) \sim \overline{G}(x), \ \ x\to\infty,
\end{align}
where any dependence between $X$ and $Y$ is allowed.
If $X$ and $Y$ are mutually independent, then  \eq{oG} holds in a more general setting where $G$ is subexponential and ${\mathbb P} (Y>x) = o(\overline{G}(x))$. 

 More properties and details may be found, e.g., in the book \cite{FKZ}. 

\section{Random sums}\label{auxib}
\setcounter{section}{2}
Let $\sigma_1,\sigma_2,\ldots$ be an $i.i.d.$\ sequence of non-negative random
variables having a common distribution function $G$ with finite mean $b=\dd{E}\sigma_1$. Let $S_n=\sum_{i=1}^n\sigma_i.$ Let $\mathcal R=(\mathcal R_n)_{n\geq 1}$ be an arbitrary filtration such that $(S_n)_{n\geq 1}$ is adapted to $\mathcal R$, i.e.  $\{\sigma_i\}_{1\le i \le n}$ are measurable with respect to ${\mathcal R}_n$ and 
$\{\sigma_i\}_{i>n}$ do not depend on ${\mathcal R}_n$, for any $n$. Let $\gamma$ be any stopping time w.r. to $\mathcal R$. 
The lemma below is an application of \cite[Theorem 10]{denisov2010asymptotics}. 

\begin{lemma}\label{lem:TARS}
Let $h(x)$ be any increasing-to-infinity function and $h(x)=o(x)$. Let $\gamma$ be any stopping time w.r. to $\mathcal R$.  Assume that $G\in {{\sr{IRV}}}$ and that  $\gamma$ has a light-tailed distribution. Then, 
as $x\to\infty$,
\begin{align}
\label{eq:newPSBJ2}
\{S_{\gamma}>x\} \simeq \bigcup_{n=1}^{\gamma} 
\{\sum_{1\leq i\leq n-1}\sigma_i\leq h(x),  \sigma_n >x\} 
,
\end{align}
and
\begin{align}\label{eq:TARS}
\dd{P}(S_{\gamma}>x) \sim \dd{E}\gamma \ol{G}(x).
\end{align}
\end{lemma}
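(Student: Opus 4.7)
The plan is to deduce both claims from \cite[Theorem 10]{denisov2010asymptotics}, which provides tail asymptotics and the Principle of a Single Big Jump for randomly stopped sums of i.i.d.\ subexponential summands under a mild moment condition on the stopping time. The hypotheses are readily verified: any ${\sr{IRV}}$ distribution is long-tailed and subexponential (see Appendix \ref{auxia}), and a light-tailed $\gamma$ satisfies $\mathbb{E}e^{c\gamma}<\infty$ for some $c>0$, which is far stronger than any polynomial moment requirement on $\gamma$. Applying that theorem therefore yields \eqref{eq:TARS} directly: $\mathbb{P}(S_\gamma>x)\sim \mathbb{E}\gamma\cdot\overline G(x)$.

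For the pathwise representation \eqref{eq:newPSBJ2}, set
\begin{equation*}
A_n(x) := \Bigl\{\sum_{i<n}\sigma_i \le h(x),\; \sigma_n > x\Bigr\}, \qquad A(x) := \bigcup_{n=1}^{\gamma} A_n(x).
\end{equation*}
The inclusion $A(x)\subseteq \{S_\gamma>x\}$ is immediate, since $n\le\gamma$ and $\sigma_n>x$ force $S_\gamma\ge \sigma_n>x$. Combined with \eqref{eq:TARS}, it therefore suffices to show that $\mathbb{P}(\{S_\gamma>x\}\setminus A(x))=o(\overline G(x))$. Letting $n^\ast:=\min\{n\le\gamma:\sigma_n>x\}$, with the convention $n^\ast:=\infty$ when no such index exists, one has
\begin{equation*}
\{S_\gamma>x\}\setminus A(x) \subseteq B_1(x)\cup B_2(x),
\end{equation*}
where $B_1(x):=\{S_\gamma>x,\; \max_{i\le\gamma}\sigma_i\le x\}$ (no single big jump) and $B_2(x):=\{n^\ast\le\gamma,\; \sum_{i<n^\ast}\sigma_i>h(x)\}$ (a big jump preceded by too much mass).

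The event $B_2(x)$ is controlled by the stopping-time property: $\{\gamma\ge n\}\in\mathcal R_{n-1}$ is a function of $(\sigma_1,\ldots,\sigma_{n-1})$ and hence independent of $\sigma_n$, so a union bound gives
\begin{equation*}
\mathbb{P}(B_2(x)) \;\le\; \overline G(x) \sum_{n\ge 1}\mathbb{P}\bigl(\gamma\ge n,\; S_{n-1}>h(x)\bigr).
\end{equation*}
Picking a slowly growing $M(x)\to\infty$, the tail $\sum_{n>M(x)}\mathbb{P}(\gamma\ge n)\le \mathbb{E}e^{c\gamma}\,e^{-cM(x)}$ vanishes by the exponential moment of $\gamma$, while for each fixed $n\le M(x)$, Markov's inequality gives $\mathbb{P}(S_{n-1}>h(x))\le (n-1)b/h(x)\to 0$. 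Summing in $n$, the coefficient of $\overline G(x)$ tends to zero, and hence $\mathbb{P}(B_2(x))=o(\overline G(x))$.

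The main obstacle is the bound $\mathbb{P}(B_1(x))=o(\overline G(x))$: this is the ``one-jump-dominates'' estimate asserting that the tail of $S_\gamma$ is driven by a single summand rather than by many moderate ones. The approach is to truncate $\gamma$ at level $M$, apply the classical Kesten-type estimate for deterministic sums of subexponential random variables on $\{\gamma\le M\}$, and use the exponential moment of $\gamma$ to make the residual $\{\gamma>M\}$ contribution arbitrarily small uniformly in $x$; the long-tail property $\overline G(x+h(x))\sim\overline G(x)$ characterising ${\sr{IRV}}$ distributions is then used to absorb the $h(x)$-shift. As already noted, this whole argument is packaged inside \cite[Theorem 10]{denisov2010asymptotics}, so in the final writeup the lemma is obtained as a direct corollary of that theorem.
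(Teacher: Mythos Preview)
Your proposal is correct, but it takes a longer route than the paper for \eqref{eq:newPSBJ2}. Both you and the paper cite \cite[Theorem 10]{denisov2010asymptotics} for \eqref{eq:TARS}. For the set equivalence, however, the paper (in \rem{a1}) avoids your $B_1/B_2$ decomposition entirely by exploiting a feature you did not use: the events $A_n(x)=\{\gamma\ge n,\ S_{n-1}\le h(x),\ \sigma_n>x\}$ are \emph{pairwise disjoint} (if $\sigma_n>x$ then $S_{m-1}>x>h(x)$ for every $m>n$). Hence
\[
\mathbb{P}(A(x))=\sum_{n\ge 1}\mathbb{P}\bigl(\gamma\ge n,\ S_{n-1}\le h(x)\bigr)\,\overline{G}(x)
\;\longrightarrow\; \mathbb{E}\gamma\cdot\overline{G}(x)
\]
by dominated convergence (dominant $\mathbb{P}(\gamma\ge n)$, summable since $\mathbb{E}\gamma<\infty$). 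Comparing with \eqref{eq:TARS} and using $A(x)\subseteq\{S_\gamma>x\}$ yields \eqref{eq:newPSBJ2} immediately. This bypasses any need to redo a Kesten-type ``no single big jump'' estimate for $B_1$.

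Two small points. First, the paper explicitly notes that \cite[Theorem 10]{denisov2010asymptotics} formally gives only \eqref{eq:TARS}, not a PSBJ statement; your opening sentence and closing sentence suggest otherwise, so in a final writeup you should not defer \eqref{eq:newPSBJ2} to that citation. Second, your claim that ``$\{\gamma\ge n\}$ is a function of $(\sigma_1,\ldots,\sigma_{n-1})$'' overstates the setup: $\mathcal{R}_{n-1}$ may be strictly larger than $\sigma(\sigma_1,\ldots,\sigma_{n-1})$. What matters (and what the paper assumes) is that $\sigma_n$ is independent of $\mathcal{R}_{n-1}$, which is enough for your factorisation.
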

\begin{remark}\label{rem:a1}
Formally, in \cite[Theorem 10]{denisov2010asymptotics}, only \eq{TARS} was justified. However,   \eq{newPSBJ2} follows directly from the inclusion below: 
\begin{align*} 
\{S_{\gamma}>x\} \supseteq 
\bigcup_{n=1}^{\gamma} \{\sum_{1\leq i\leq n-1}\sigma_i\leq h(x),  \sigma_n >x\} \equiv 
\bigcup_{n=1}^{\infty} \{\gamma\ge n, \sum_{1\leq i\leq n-1}\sigma_i\leq h(x),  \sigma_n >x\}.
\end{align*}
Since the events in the second union  are disjoint, the probabilities of the above three events have the same asymptotics as the right hand side in \eq{TARS}. 

\end{remark}

\section{Tail asymptotics for the busy period in a stable single-server queue}
\label{sec:refined}
\setnewcounter

The following result may be found in \cite{Zwar2001}   and in \cite{FossMiya2018} (see Theorem 3.1 and Corollary 3.1 therein). see also a multivariate version in \cite{AsmuFoss2018}. 

Let $\{\sigma_n\}$ and $\{t_n\}$ be two mutually
independent sequences of non-negative random variables and let each of them be $i.i.d.$, with finite means $\dd{E}\sigma_n =b$
and $\dd{E} t_n =a$. Assume $\rho :=b/a <1$. 
Let $\xi_n=\sigma_n-t_n.$ Let $V_0=0$ and
$V_n = \sum_{k=1}^n \xi_k$ and $S_n = \sum_{k=1}^n \sigma_k$. Let $\tau = \min \{n\ge 1 \ : \ V_n \le 0\}$ 
and $B=S_{\tau}$. 

Let $\sigma_n$ have distribution $G$ from the class ${\sr{IRV}}$. An inspection of the proof of Theorem 3.1 in \cite{FossMiya2018} shows than, in fact, the following 
(formally, more general) result holds:

\begin{proposition}\label{prop10}
As $x\to\infty$,
\begin{align}\label{eq:joint}
\{B>x\} \simeq \{B>x, \tau > x/a\} \simeq \{\tau > x/a\} \simeq \cup_{n\ge 1} \{\tau \ge n, \sigma_n >x(1-\rho)\}.
\end{align}
\begin{align*}
\sum_{m<n} \dd{P}(\tau\ge n, \sigma_m>x(1-\rho), \sigma_n>x(1-\rho)) = o(\dd{P}(B>x))
\end{align*}
and, therefore, 
\begin{align} \label{eq:B2}
\dd{P} (B>x) \sim \dd{E} \tau \overline{G}(x(1-\rho ))
\quad \mbox{and}
\quad
\dd{P} (\tau >n) \sim \dd{E} \tau \overline{G}(n(a-b)),
\end{align}
as $x\to\infty$ and $n\to\infty$.

\end{proposition}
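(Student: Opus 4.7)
The plan is to follow the refined single-big-jump scheme of Zwart (2001) and of Foss--Miyazawa (2018), extracting event-level equivalences rather than just probability-level asymptotics. I would decompose the argument into three stages: (i) the tail asymptotic for the ladder epoch $\tau$; (ii) upgrading to the event-level equivalences in \eqref{eq:joint} and the tail of $B$; and (iii) the negligibility of the two-big-jumps contribution.

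For stage (i), the asymptotic $\mathbb{P}(\tau > n) \sim \mathbb{E}\tau \cdot \overline{G}(n(a-b))$ is the key heavy-tail input, known from Zwart~(2001) and Foss--Miyazawa~(2018, Thm~3.1). The underlying idea: since $\mathbb{E}\xi_1 = b-a < 0$, the walk $V_n$ has drift $-n(a-b)$, so $\{\tau > n\}$ essentially requires a single $\sigma_k$, $k\le n$, of size $\gtrsim n(a-b)$ to keep the walk positive. The lower bound uses the inclusion
\[
\{\tau \geq n\} \supset \bigcup_{k=1}^{n-\lceil h(n)\rceil} \bigl(\{\sigma_k > n(a-b) + h(n)\} \cap A_k(n)\bigr),
\]
for some $h(n) = o(n)$, $h(n) \to \infty$, where $A_k(n)$ is a high-probability ``typical'' event for the remaining $\sigma_i$'s and $t_i$'s (via SLLN), ensuring that once the walk is boosted above $n(a-b) + h(n)/2$ at step $k$, it cannot return to zero by step $n$. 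The pieces in the union are essentially disjoint by stage (iii), and a Wald-type summation produces the prefactor $\mathbb{E}\tau$ (the expected length of an excursion above $0$). The matching upper bound comes from the classical decomposition of $\{V_n > 0\}$ into big-jump events, combined with the renewal theorem applied to the regenerative structure of the queueing process.

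For stage (ii), the four events in \eqref{eq:joint} are linked as follows. From $V_\tau \leq 0$ we get $B = S_\tau \leq T_\tau$, and by SLLN $T_\tau/\tau \to a$ a.s.\ on $\{\tau \to\infty\}$, so $\{B > x\} \subseteq \{\tau > x/a - o(x)\}$ modulo a set of negligible probability, giving $\{B > x\} \simeq \{B > x, \tau > x/a\}$. Conversely, on $\{\tau > x/a\}$ stage (i) supplies some $\sigma_{n_0}$ with $\sigma_{n_0} > x(1-\rho)$; a fluid-limit computation (the workload jumps by $\sigma_{n_0}$ and then decays at rate $a-b$ per step) yields $B \approx \sigma_{n_0}/(1-\rho) > x$ w.h.p., hence $\{\tau > x/a\} \stackrel{\sim}{\subset} \{B > x\}$. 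The last equivalence is the single-big-jump representation of $\{\tau > x/a\}$ with threshold $y := x(1-\rho) = (x/a)(a-b)$. The asymptotic \eqref{eq:B2} for $B$ then follows by Wald applied to the (nearly) disjoint union $\bigcup_n \{\tau \geq n, \sigma_n > y\}$, using independence of $\sigma_n$ from $\{\tau \geq n\}$.

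For stage (iii), since $\{\tau \geq n\}$ is $\sigma(\sigma_1,\ldots,\sigma_{n-1})$-measurable and therefore independent of $\sigma_n$, write
\[
\sum_{m<n}\mathbb{P}(\tau \geq n, \sigma_m > y, \sigma_n > y) = \overline{G}(y) \sum_m \mathbb{E}\bigl[\mathbf{1}(\sigma_m > y)(\tau - m)^+\bigr].
\]
On $\{\sigma_m > y, \, m \leq \tau\}$ the big jump at step $m$ leaves an excess of order $y$ in the walk, so $(\tau - m)^+$ is of order $y/(a-b)$ with light-tailed fluctuations (using the light tails of the $t$'s). Summing over $m$ and using $\mathbb{E}\tau < \infty$ yields a bound of order $\overline{G}(y)^2 \cdot y \cdot \mathbb{E}\tau$; since $\overline{G}$ is dominated-varying with finite mean one has $y\overline{G}(y) \to 0$, so the total is $o(\overline{G}(y)) = o(\mathbb{P}(B > x))$. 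The hardest step is pinning down the exact prefactor $\mathbb{E}\tau$ in stage (i): the bare Big-Jump upper bound is routine, but the excursion/renewal-theoretic identification of the constant, uniformly in $n$, is delicate. I would circumvent this by reading off the constant and the single-big-jump event structure directly from the proof of Theorem~3.1 in Foss--Miyazawa~2018, which already carries out this computation; my task is then to track each step at the event level ($\simeq$) rather than merely at the probability level.
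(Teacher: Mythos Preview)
Your proposal is correct and matches the paper's own treatment exactly: the paper does not give a self-contained proof of this proposition but simply states that ``an inspection of the proof of Theorem~3.1 in [FossMiya2018]'' yields the event-level equivalences, which is precisely what you propose to do. Your three-stage outline (ladder-epoch asymptotic, event-level upgrade via SLLN/fluid reasoning, two-big-jumps negligibility via the Wald-type computation) is the standard route through that proof, and your explicit acknowledgment that the constant $\mathbb{E}\tau$ is read off from Foss--Miyazawa rather than rederived is exactly the shortcut the paper takes.
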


\section{Upper bounds on the time to empty the network}
\label{Upper-empty}
\setnewcounter

This section contains two simple lemmas, and their proofs are based on the first principles.

\begin{lemma}\label{lem:maximaldater}
Consider a Generalised Jackson Network where at time $0$ finitely many customers are already located at some servers, where at each {{of those servers}} the first customer is taking service and all others are in the queue. Assume that there is no arrival input (no new arrivals). Let $y>0$.  Suppose that the residual service time of each customer in service is at most $y$ and that the sum of the queue lengths at all servers (including customers in service) is at most $y$, too. Let $B_{md}$ denote the time to empty the network (recall that this is the {\it maximal dater}, in the terminology of \cite{BaccFoss2004}). Then there exists an absolute constant $c_{md}>0$ such that  
\[\lim_{y\to\infty}\mathbb P(B_{md}\leq c_{md}y)=1.\]
\end{lemma}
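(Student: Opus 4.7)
The strategy is to bound $B_{md}$ by the total remaining workload $W(0)$ in the system at time $0$ and then to control this workload using the fact that at most $y$ customers are present, each contributing only $O(1)$ in expectation to future work. Define the workload process $W(t)$ as the sum over all customers present at time $t$ of their remaining service requirements (residual at the current service plus the full service times of all subsequent services they will undergo before leaving). Since there are no exogenous arrivals, $W(t)$ is non-increasing, and whenever the network is non-empty at least one server is busy, so $W$ decreases at rate at least $1$. Hence $B_{md}\le W(0)$ a.s., and it suffices to show $W(0)\le c_{md}y$ w.h.p.

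Next, I would split $W(0)\le Ky+\widetilde W$, where the term $Ky$ absorbs the residual service times of the at most $K$ customers currently in service (each bounded by $y$ by assumption), and $\widetilde W$ collects the duration of all services yet to be performed \emph{after} each customer's current position. Grouping by station,
\[
\widetilde W = \sum_{k=1}^{K} \sum_{j=1}^{M_k} \sigma_{k,m_k+j},
\]
where $M_k$ is the random number of such future services at station $k$, $m_k$ counts those already completed there, and the $\sigma_{k,\cdot}$'s entering the sum are the unused portion of the $\Sigma_S$ sequence, hence $i.i.d.$\ with mean $b_k$ and independent of $M_k$ (which is a measurable function of the routing variables $\alpha$ and of the customers' current states).

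To control $M_k$, I would write $M_k=\sum_i \widehat N_i^{(k)}$ over the at most $y$ customers, where $\widehat N_i^{(k)}$ counts the future visits of customer $i$ to station $k$. Conditionally on the vector of current states, these counts are independent across $i$ (functionals of the per-customer $\alpha$-sequences from $\Sigma_C$), with means uniformly bounded by some $\bar N_k<\infty$ and finite exponential moments uniform in the starting state, thanks to \eqref{eq:exp} and standard results for finite absorbing Markov chains. A Chernoff-type bound then gives $M_k\le 2\bar N_k y$ w.h.p., after which the law of large numbers for the i.i.d.\ sequence $\sigma_{k,\cdot}$ yields $\sum_{j=1}^{M_k}\sigma_{k,m_k+j}\le 3 b_k \bar N_k y$ w.h.p. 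Summing over $k$ produces $W(0)\le c_{md}y$ w.h.p.\ with $c_{md}:=K+3\sum_k b_k\bar N_k$.

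The main obstacle is the dependence structure within $W(0)$: the per-customer remaining workloads are \emph{not} independent across customers, because customers queuing at the same server share overlapping segments of the sequence $\{\sigma_{k,\cdot}\}$. Decomposing $\widetilde W$ station by station and then applying concentration at the level of the routing counts $\widehat N_i^{(k)}$, which genuinely are independent across customers under the customer-associated infrastructure $\Sigma_C$, is what allows the concentration argument to proceed cleanly.
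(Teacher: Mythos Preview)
Your proof is correct. The overall strategy---bound $B_{md}$ by the total remaining work and control the latter via a law of large numbers---matches the paper's, but the execution differs in two respects worth noting. First, the paper does not use the workload inequality $B_{md}\le W(0)$; instead it invokes the monotonicity property MP1 to \emph{delay} services, completing the at most $K$ residual services one at a time (Step~1, taking time $V_1\le Ky$), so that all remaining customers are ``fresh'', and then bounds the remaining time by the isolation sum of per-customer total service times $\sigma_{k,i}^{total}$. Your workload argument reaches the same decomposition $Ky+\widetilde W$ without appealing to MP1. Second, to control the future work the paper groups customers by their station at time $V_1$ and runs the SLLN on the i.i.d.\ families $\{\sigma_{k,i}^{total}\}_i$ along subsequences with $y_k/y\to v_k$; you instead decompose station-wise, first concentrate the visit counts $M_k$ by a Chernoff bound using per-customer independence of routings, and only then apply the LLN to the raw service times $\sigma_{k,\cdot}$. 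Your route is a bit more explicit and yields a rate; the paper's subsequence argument is more compact but gives none.

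One point to make fully rigorous: the infrastructure $\Sigma_C$ in the paper is built for exogenous arrivals, so the independence of the $\widehat N_i^{(k)}$ for customers already \emph{inside} the network needs a word of justification. The cleanest fix is to observe that under $\Sigma_S$, if the customers are processed in isolation order they consume disjoint segments of the i.i.d.\ $\alpha$-sequences (hence independent routings), and by the invariance property IP the resulting $M_k$ coincides with that of the original dynamics.
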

\begin{proof}
We need the following preliminary observation.
We have assumed earlier that, for a customer that arrives to the network, the probability $\beta_k := {\mathbb P} (N_k>0)$ of visiting station $k$ is positive, for all $k$. If instead a customer is already at station $k$ and its service has not started yet, then its total service time by all servers before its departure from the network, say $S^{total}_k$ is not bigger than the total service time of a customer that arrives to the network, conditioned on the event that it eventually visits station $k$. Therefore, 
${\mathbb E} S_k^{total} \le b/\beta_k\le b/\beta$ where $b = {\mathbb E}\widetilde{S}_1$ and $\beta = \min_{k \in [K]} \beta_k >0$.

Our proof relies on the monotonicity property MP1 and includes two steps. 

Step 1. Introduce service delays as follow:
At time $0$ block services at all stations but station $1$ and allow the customer in service at station $1$ (if any) to complete its service. After the service completion, the served customer either moves to another station or leaves the network.  Then we block services at all stations but station $2$ and allow the customer in service at station $2$ (if any) to complete its service. Then we repeat the same blocking/service  consequently with stations $3$ to $K$, to allow all customers that had unfinished service at time $0$ to complete it. In total, this step will take time, say, $V_1$ that is the sum of residual service times viewed at time $0$ and is at most $Ky$, by our assumptions.

Step 2: At the time $V_1$, there are no customers in service and there are, say, $y_k\equiv y_k(y)$ customers in queue $k$ (with the customer in front  awaiting service), where 
$\sum_1^K y_k \le y$ (since there is no new arrivals).  
Then the time to empty the network with these initial queues, say $V_2\equiv V_2(y_1,\ldots,y_k)$, is not bigger than the sum of total service times of all these customers,
where the $y_k$ customers from queue $k$ have total service times in the network $\sigma_{k,1}^{total}, \ldots, \sigma_{k,y_k}^{total}$, for $k=1,\ldots,K$.
All these random variables are mutually independent and, for each $k$, identically distributed. 

Assume that $y\to\infty$ along some subsequence and consider any subsequence such that all ratios $y_k/y$, $k \in [K]$ converge to some limits, say $v_k$,
where $\sum_1^k v_k \le 1$.
Then, for any $k$ such that $v_k>0$, we have
that 
\begin{align*}
\sum_1^{y_k} \sigma_{k,i}^{total}/y \to v_k {\mathbb E} \sigma_{k,1}^{total} \leq v_kb/\beta, \ \ \text{a.s.}
\end{align*}
and the limit is zero if $v_k=0$.

Then, for any $\varepsilon >0$, 
\begin{align*}
{\mathbb P} ((V_1+V_2)/y \le (1+\varepsilon)c_{md}) \to 1, \ \text{as} \ y\to\infty
\end{align*}
where $c_{md}= K+b/\beta$. 
In addition, the convergence holds 
with uniformity in all $y_k\equiv y_k(y), k=1,\ldots,K$ such that
$\sum_{k=1}^K y_k \le y$. This completes the proof of the lemma. 
\end{proof}

\begin{lemma}\label{lem:laststep}
In the conditions of \lem{maximaldater}, assume instead that there is a renewal arrival input of exogenous customers and that the stability \eq{stab}, the unboundedness \eq{unbound} conditions of \pro{Prop1} hold. We also assume that the residual arrival time, say $R_0$, of the first exogenous customer is at most $c_0y$, for some $c_0\ge 0$. Let $B_e$ be the first time when the network becomes empty. Then there exists $c>0$ such that  
\[\lim_{y\to\infty}\mathbb P(B_e\leq cy)=1.\]
\end{lemma}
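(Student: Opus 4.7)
The plan is to apply Lemma~\ref{lem:maximaldater} to handle the initial load and to combine it with a modified version of the single-server upper-bound queue (UBQ) from Section~\ref{sec:auximodel} to absorb the exogenous arrivals. The key observation is that both sources of load contribute $O(y)$ units of work and that the stable UBQ has strictly negative drift, so it drains $O(y)$ workload in $O(y)$ time.

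First, I would apply Lemma~\ref{lem:maximaldater} to the initial configuration under the (hypothetical) scenario in which all exogenous arrivals are blocked; this produces a random clearing time $V$ with $V \le c_{md} y$ with high probability. Second, fixing $L$ large enough that $\widehat\rho = \mathbb{E}\widehat\sigma_1/\mathbb{E}\widehat t_1 < 1$, as in \eqref{eq:defL1}, I would introduce an auxiliary single-server queue $\mathrm{M}$ that starts at time $0$ already busy on a virtual task of duration $V$ and subsequently receives the grouped exogenous customers exactly as the UBQ of Section~\ref{sec:auximodel}, with i.i.d.\ service times $\widehat\sigma_n = X_{(n-1)L+1,\,nL}^0$ and inter-group arrival times $\widehat t_n = T_{Ln}-T_{L(n-1)}$. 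Third, I would argue via \textbf{MP1}, combined with a coupling patterned on the proof of Lemma~\ref{upper_bound1}, that $B_e \le R_0 + \tau_M$ almost surely on the event $\{V \le c_{md} y\}$, where $\tau_M$ is the first time after $0$ at which $\mathrm{M}$ is idle with no pending group. The virtual task encodes, through Lemma~\ref{lem:maximaldater}, the clearance of the initial customers, while the standard UBQ domination of Section~\ref{sec:auximodel} takes care of the exogenous groups thereafter.

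Fourth, I would analyse $\tau_M$ by exploiting the negative-drift of $\mathrm{M}$. Writing $W_M(t)$ for its workload at time $t$ with $W_M(0)=V$, the SLLN applied to the renewal arrival stream and to the i.i.d.\ sequence $\widehat\sigma_n$ yields, on the event that $\mathrm{M}$ stays busy,
\[
\frac{W_M(t)-V}{t}\xrightarrow[t\to\infty]{}\widehat\rho-1<0 \qquad \text{almost surely,}
\]
which gives $\tau_M \le V/(1-\widehat\rho)+o(y)$ with high probability. Combining with $V\le c_{md}y$ and $R_0\le c_0 y$, this produces $B_e \le \bigl(c_0 + c_{md}/(1-\widehat\rho) + o(1)\bigr)\,y$ w.h.p., which proves the lemma with, say, $c := c_0 + c_{md}/(1-\widehat\rho)+1$.

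The step I expect to be the main obstacle is the coupling, i.e.\ the rigorous justification that $\mathrm{M}$ dominates $B_e$ despite the GJN's initial state being non-empty. One must merge the ``saturated'' argument of Lemma~\ref{lem:maximaldater} (which ignores arrivals) with the group-wise UBQ argument of Lemma~\ref{upper_bound1} (which starts from an empty network) into a single monotone coupling allowing the two phases to be concatenated without loss. Property \textbf{MP1}, together with the uniform bounds on the initial residual services and initial queue sizes, should suffice for this, but the bookkeeping around arrivals that occur during the ``virtual task'' window $[0,V]$ will be delicate. Once the domination is in place, the remaining estimate is a routine LLN bound for a stable GI/G/1 workload process.
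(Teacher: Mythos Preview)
Your overall strategy matches the paper's: use Lemma~\ref{lem:maximaldater} to clear the initial load in time $O(y)$, then a stable UBQ-type single-server queue to drain the $O(y)$ exogenous work accumulated meanwhile, with the LLN giving a linear-in-$y$ drain time. The organisation differs slightly---the paper blocks all arrivals until the first arrival time $T^{first}$ after $B_{md}$, then feeds the $M=O(y/a)$ waiting customers into the group-$L$ upper-bound queue, whereas you encode the clearing as a virtual initial task at the head of $\mathrm{M}$---but the arithmetic is the same.

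There is, however, a real gap in your domination step, and it is precisely the issue you flagged as delicate: the inequality $B_e \le R_0 + \tau_M$ is not valid. Suppose $\mathrm{M}$ first idles just after completing group $n$, so that $\tau_M < T_{L(n+1)}$. In the delayed (UBQ-gated) system, customers $1,\ldots,Ln$ and the initial load have departed; but in the \emph{original} GJN the individual customers $Ln+1, Ln+2,\ldots$ whose arrival times lie in $(T_{Ln},\tau_M)$ are already inside the network, so it need not be empty at $\tau_M$. This is exactly the non-monotonicity noted before Lemma~\ref{upper_bound1}: a busy period of the UBQ can terminate strictly before the GJN empties. The paper's proof closes the gap by appending the extended-busy-period correction of Section~\ref{sec:ulbound}: after the upper-bound queue first idles, one waits up to $J$ further UBQ busy periods (total duration $T^J$), with $J$ the light-tailed stopping time of \eqref{eqn:J}, until an event $A_{\widehat{\cal N}_m}$ certifies that the GJN itself is empty. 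Since the law of $T^J$ does not depend on $y$, this extra term is $o_p(y)$ and your final constant is unaffected; but it (or an equivalent device) is needed to make the domination rigorous.
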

\begin{proof}
Recall that blocking arriving customers at the entrance increases $B_e$, thanks to the monotonicity property MP2. 

Step 1: We block the newly arrived customers at the entrance until the first arrival after time $B_{md}$, denote this time by $T^{first}$. If $R_0 \ge B_{md}$, then the delay is $T^{first}=R(0) \le cy$. If $R(0) < B_{md}$, then, thanks to the basic facts from the renewal theory (see e.g.\ \cite[Theorem 4.3]{asmussen2003applied}), the distribution of the overshoot 
$T^{first} - B_{md}$ is stochastically bounded  by the integrated tail distribution of a typical inter-arrival time, multiplied by a constant. Therefore, ${\mathbb P}(T^{first} - B_{md}\le Cy)\to 1$ as $y\to\infty$, for any $C\ge c_0$.

Step 2: At the time $T^{first}$, there is no customers in service and there is, say, $M$ new customers waiting at the entrance. By the Law of Large Numbers, 
for any $\varepsilon >0$ and as $y\to\infty$, 
\begin{align*}
{\mathbb P} (M\le (1+\varepsilon)c_{md}y/a) \to 1 
\ \ \text{a.s.}
\end{align*}

Now we use the saturated group-$L$ network (see Section \ref{sec:auxiliary}) with at most $y$ customers waiting at the entrance as an upper bound, where $L$ is chosen sufficiently large for \eq{defL1} to hold. As we know, the saturated 
group-$L$ network may be viewed as a stable single-server queue where the large queue length decays asymptotically  linearly with rate 
$L/{\mathbb E}X^0_{1,L} - 1/a$, and, for any $\varepsilon >0$,  the time to empty this queue, say
$T_1^{empty}$,
satisfies
\begin{align*}
{\mathbb  P} (T_1^{empty} < y(1+\varepsilon)(L/{\mathbb E}X^0_{1,L} - 1/a)) \to 1, \ \text{as} \ y\to\infty.
\end{align*}
However, as we know from Section \ref{sec:ulbound}, it may take up to $J$ further busy periods of the upper-bound queue (with a total duration, say $T^J$) to empty the network. Fortunately, the distribution of $T^J$ does not depend on $y$ and
${\mathbb P} (T^J \leq h(y)) \to 1$, for any function $h(y)\to\infty$ such that $h(y)=o(y)$. 

To summarise, the total time to empty the network $B_e$ is at most $T^{first}+T^{empty}+T^J$, and, for any $\varepsilon >0$ and as $y\to\infty$, 
\begin{align*}
{\mathbb P} (T^{first}+T^{empty}+T^J \leq cy) \to 1,
\end{align*}
where
\begin{align*}
c= c_0+(1+\varepsilon)c_{md}\left(1/a+(aL/{\mathbb E}X^0_{1,L}-1)^{-1}\right).
\end{align*}
\end{proof}

\section{Fluid limits in Generalised Jackson Networks}
\label{app:fluid-limit}
\setnewcounter

In this section, we recall briefly the fluid approximation approach and provide an overview on structural properties of fluid limits in the case of Generalised Jackson Networks (GJNs). Then we consider  a particular type of initial values of stochastic  counting processes that we need.

\subsection{Overview on structural properties of fluid limits in GJNs}
\label{app:overview}

In \sectn{main}, we introduced a time-homogeneous Markov chain ~$Z_n = ((Q_{n,k}, R_{n,k})$, $k=1,2,\ldots K), n=1,2,\ldots$ and formulated the stability result (Proposition 2.1). The most modern way for proving this result is due to Dai (see Section 5 in \cite{dai1995positive}), it is based on the fluid approximation approach. This fluid approximation is also useful to study the tail probability asymptotic of the busy period of the GJN. Here we recall some basic notions and structural properties of fluid limits of GJN's that are taken or directly deduced from   \cite{dai1995positive}. Some of the related results can be found in \cite{Lela2005a,BaccFossLela2005,ChenYao2001}.

In this section, we use Lemmas 4.1--4.4 and Theorem 4.1 from \cite{dai1995positive} for analysing  fluid limits. It should be noted that Dai \cite{dai1995positive} assumes the so-called spread-out condition (see (1.5) in  \cite{dai1995positive}) in addition to \eq{stab} and \eq{unbound} for the GJN, however this extra condition is not needed for most  statements in  \cite{dai1995positive}, whilst it is crucial for Theorem 4.2 and Lemma 4.5 of that paper, which we do not use. To conclude, we only use those results of \cite{dai1995positive} that hold  without the spread-out condition.

To be consistent with the above papers, we consider instead a continuous-time Markov process \begin{align}\label{eq:zt}Z(t) = \Big(R_0(t),(Q_k(t), R_k(t)), k \in [K]\Big), t\ge 0\end{align}
where $Q_k(t)$ is the queue length at station $k$ at time $t$ (including a customer in service, if any), $R_k(t)$, $k \in [K]$ is the residual service time of the customer in service (which is assumed to be zero if the queue is empty), and $R_0(t)$ is the residual inter-arrival time , i.e. the time to the next arrival of an exogeneous customer after time $t$. We assume the process to be right-continuous. Then, clearly, $Q_{n,k} = Q_k(T_n)$ and $R_{n,k}=R_k(T_n)$, for all $n$.

Vectors $Z(t)$ take values in the space ${\cal R}_+ \times ({\cal Z}_+ \times {\cal R}_+)^K$ and we equip it with the ${\cal L}_1$ norm
\begin{align*}
|Z(t)| = R_0(t) + \sum_1^K (Q_k(t)+R_k(t)), \ \ t\ge 0.
\end{align*} 
Denote by $A_{0,k}(t)$ the counting process of the exogenous arrivals at station $k$ for $k \in [K]$, and by $S_{k}(t)$ the delayed renewal process with the sequence of the inter-counting times $\{\sigma_{k,i}; i \ge 1\}$ and with initial delay $R_{k}(0)$, where the infrastructure $\Sigma_{S}$ is used. Let $J_{k}(t)$ be the total busy time of server $k$ up to time $t \ge 0$, and let
\begin{align}
\label{eq:Dk-t}
  D_{k}(t) = S_{k}(J_{k}(t)), \qquad t \ge 0, k \in [K],
\end{align}
then we can see that $D_{k}(t)$ is the counting process of departures from station $k \in [K]$.

Let $D_{j,k}(t) = \sum_{i=1}^{D_{j}(t)} \alpha_{k,i}$ for $j \in [K]$ and $k \in [K+1]$, then $D_{j,k}(t)$ is the counting process of departures from stations $j$ which are routed to station $k$  for $j \in [K]$ or leave the network for $k=K+1$. We obviously have
$D_{k}(t) = \sum_{j=1}^{K+1} D_{k,j}(t)$. Let
\begin{align*}
  A_{k}(t) = A_{0,k}(t) + \sum_{j=1}^{K} D_{j,k}(t),
\end{align*}
then $A_{k}(t)$ is the counting processes of arrivals and service completions at station $k \in [K]$. Hence, we have 
\begin{align}
\label{eq:Ckt-2}
  Q_{k}(t) = Q_{k}(0) + A_{k}(t) - D_{k}(t), \qquad t \ge 0, k \in [K].
\end{align}
Thus, the queue length vector process $Q(\cdot)=(Q_1(\cdot),\ldots, Q_K(\cdot))$ is determined by its initial value $Q(0)$ and by
\begin{align*}
  \{A_{0,k}(t), D_{k,j}(t); k \in [K], j \in [K+1], t \ge 0\},
\end{align*}
where $[K+1] = \{1,2,\ldots, K+1\}$. Further, $R_0(0)= \inf \{t\ge 0: \sum_{k \in [K]} A_{0,k}(t)\ge 1 \}$.

We consider various initial values $Z(0) \equiv z = 
(R_0(0), ((Q_k(0),R_k(0)), k=1,\ldots,K)$ and write, for clarity,  
$Z^z(t), A_k^z(t), D_k^z(t), Q_k^z(t), D_{k,j}^z(t)$ instead of $Z(t), A_k(t)$ etc. 
Consider a family of processes indexed by $z$, that are scaled linearly in time and in space: for $|z|>0$, $t\ge 0$, 
\begin{align}
\label{eq:seq-2}
 & \widehat{Z}^z(t) \equiv (\widehat{R}_{0}(t), (\widehat{Q}_{k}(t),\widehat{R}_{k}(t)), k \in [K]) = \frac{1}{|z|} Z^z(|z|t),
\end{align}
and, for $k \in [K]$, $j \in [K+1]$, 
\begin{align}
\label{eq:seq-3}
 & (\widehat{A}_k^z(t),
\widehat{D}_k^z(t), 
\widehat{D}_{k,j}^z(t)))
= \frac{1}{|z|}
\left(
(A^z_k(|z|t),
D^z_k(|z|t),  D^z_{k,j}(|z|t))
\right).
\end{align}
Let $\overline{z}:= (\overline{R}_0(0), (\overline{Q}_k(0), \overline{R}_k(0)), k=1,\ldots,K)$ be any vector with non-negative coordinates that sum up to 1.

Here is the basic {\bf known} result we will refer to:

\begin{proposition}[Theorem 4.1 of \cite{dai1995positive}]
\label{pro:E1}
As $|z|\to\infty$ and as 
\begin{align}
\label{eq:fconv}
R_0^z(0)/|z| \to \overline{R}_0(0), R_k^z(0)/|z|\to
\overline{R}_k(0), Q_k^z(0)/|z|\to \overline{Q}_k(0), \ \text{for} \ k=1,\ldots,K,
\end{align} 
the trajectories of process $\widehat{Z}^{z}(t)$ of \eq{seq-2} converges uniformly on compact sets, $u.o.c.$ for short, to the trajectories of deterministic processes  
\begin{align}
\label{eq:flimit-1}
\overline{Z}(t) =(\overline{R}_0(t), (\overline{Q}_k(t), \overline{R}_k(t)), k \in [K]), \ \ t\ge 0\
\end{align} 
with $\overline{Z}(0)=\overline{z}$, process $\ol{Z}(t)$ is uniquely determined  by $\overline{z}$.
 \end{proposition}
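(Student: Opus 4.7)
The plan is to follow the standard fluid-limit program for open queueing networks (as in Dai~\cite{dai1995positive} and Chen--Yao~\cite{ChenYao2001}): first establish functional strong laws of large numbers for all primitive counting processes under the scaling \eqref{eq:seq-2}--\eqref{eq:seq-3}; then use these to show that every subsequential fluid limit satisfies a deterministic set of equations (the fluid model equations); and finally show that these equations admit a unique solution for each admissible initial condition $\overline{z}$, which forces the full sequence to converge.

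First I would exploit the functional SLLN for renewal processes. Since the inter-arrival times $\{t_n\}$ are $i.i.d.$ with mean $a$, the exogenous arrival process satisfies $A_{0,k}(|z|t)/|z|\to (p_{0,k}/a)\,t$ $u.o.c.$ almost surely as $|z|\to\infty$; because the initial residual $R_0^z(0)/|z|\to \overline{R}_0(0)$, the shifted process gives $\widehat{A}_{0,k}^z(t)\to (p_{0,k}/a)(t-\overline{R}_0(0))^+$ $u.o.c.$ Similarly, by the SLLN for $\{\sigma_{k,i}\}$ and $\{\alpha_{k,i}\}$, the service renewal processes and routing processes satisfy $S_k(|z|u)/|z|\to u/b_k$ and the scaled routing sums converge $u.o.c.$ to $p_{k,j}$ times the scaled departure processes. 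Since the busy-time process satisfies $0\le J_k^z(t_2)-J_k^z(t_1)\le t_2-t_1$, the scaled $\widehat{J}_k^z$ is $1$-Lipschitz, hence relatively compact on compacts by Arzelà--Ascoli; a standard diagonal subsequence extraction yields $u.o.c.$ convergent subsequences for $\widehat{J}_k^z$ and, composed with the $SLLN$ for $S_k$, for $\widehat{D}_k^z$ and $\widehat{D}_{k,j}^z$.

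Next I would write down the equations satisfied by any limit $\overline Z(\cdot)$. Passing \eqref{eq:Ckt-2}, \eqref{eq:Dk-t} and the traffic equation for $A_k$ to the limit gives
\begin{align*}
\overline{Q}_k(t) &= \overline{Q}_k(0) + \overline{A}_k(t) - \overline{D}_k(t), \\
\overline{D}_k(t) &= \overline{J}_k(t)/b_k, \\
\overline{A}_k(t) &= (p_{0,k}/a)(t-\overline{R}_0(0))^+ + \sum_{j=1}^{K} p_{j,k}\,\overline{D}_j(t),
\end{align*}
together with the nonidling constraint: $\overline{I}_k(t):=t-\overline{J}_k(t)$ is non-decreasing and increases only when $\overline{Q}_k(t)=0$. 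The residuals $\overline{R}_0(t),\overline{R}_k(t)$ are obtained as $(\overline{R}_0(0)-t)^+$ and, on $\{\overline{Q}_k>0\}$, as the limit of the renewal residual after $|z|\,\overline{J}_k(t)$ units of work (which scales to zero on the fluid time-scale, so $\overline{R}_k(t)=\overline{R}_k(0)\mathbf{1}(t=0,\overline{Q}_k(0)>0)$ in the standard formulation). Uniqueness of the solution to this complementarity system is classical for GJNs: under the stability condition \eqref{eq:stab} the system is a Lipschitz fixed point of the Skorokhod/oblique-reflection map determined by the routing matrix, and this Lipschitz property gives both existence and uniqueness starting from any $\overline{z}$; see Lemma~4.2--4.4 of \cite{dai1995positive}.

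The main obstacle is handling the idle-time processes $\overline I_k$ jointly through the nonidling constraint when multiple stations simultaneously touch zero: subsequential limits must be shown to respect the complementarity $\int_0^\infty \overline{Q}_k(t)\,d\overline{I}_k(t)=0$ in the limit, which requires a careful argument that $\widehat{Q}_k^z$ can be only asymptotically zero on intervals where $\widehat{I}_k^z$ is growing (this is where Dai's Lemma~4.2 is used). Once that is secured, uniqueness of the fluid model solution converts subsequential convergence along arbitrary sequences into full $u.o.c.$ convergence of $\widehat{Z}^z(\cdot)$ to $\overline Z(\cdot)$, giving the statement. Throughout, we use only those results of \cite{dai1995positive} that do not require the spread-out assumption, as noted in the paper.
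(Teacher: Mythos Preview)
The paper does not give its own proof of this proposition: it is quoted verbatim as Theorem~4.1 of \cite{dai1995positive}, with \rem{E1} supplying the uniqueness part via the $\sr{M}$-matrix property of $I-P'$ and the complementarity-problem formulation (citing \cite{ChenYao2001}). Your sketch is essentially the standard Dai program the paper is invoking, so at the level of strategy there is nothing to compare.

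Two small inaccuracies in your write-up are worth flagging. First, uniqueness of the fluid limit does not require the stability condition \eq{stab}; as \rem{E1} notes, it follows from $I-P'$ being an $\sr{M}$-matrix, which is a consequence only of $K+1$ being absorbing for the routing chain. Second, your description of the limiting residuals is off: if $\overline{R}_k(0)>0$ then $\overline{R}_k(t)=(\overline{R}_k(0)-t)^{+}$ on the fluid scale (see property (p1) in \pro{propE2}), not the indicator expression you wrote; the initial residual is \emph{not} negligible under the scaling, precisely because it is allowed to be of order $|z|$.
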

 
\begin{remark}
\label{rem:E1}
In Theorem 4.1 of \cite{dai1995positive}, the uniqueness of $\ol{Z}(t)$ is not considered, but it comes from the following facts. First, it is easy to see that $\ol{Q}(t)$ is the solution of the continuous dynamic complementarity problem with reflection matrix $R=(I-P)'$ (see Definition 5.1 of \cite{dai1995positive}), where $P$ is the $K \times K$ matrix whose $(j,k)$-entry is $p_{j,k}$ for $j,k \in [K]$. Secondly, it is well known that this complementarity problem has the unique solution if $R$ is an $\sr{M}$-matrix, that is, all of its diagonal entries are positive, all of its off-diagonal entries are not positive and it is invertible (e.g., see Theorem 7.2 of \cite{ChenYao2001}). Since $I-P'$ is an $\sr{M}$-matrix, $\ol{Q}(t)$ is uniquely determined.
\end{remark}

\begin{corollary}\rm
\label{cor:E1}
Under the same condition \eq{fconv} as in \pro{E1}, the process \eq{seq-3} converges $u.o.c.$ to the deterministic process 
\begin{align}
\label{eq:flimit-2}
( \overline{A}_k(t),  \overline{D}_k(t),  \overline{D}_{k,j}(t)), \ \ t\ge 0, \ k \in [K], \ j \in [K+1],
\end{align}
which is uniquely determined by $\ol{z}$. 
\end{corollary}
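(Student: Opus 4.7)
The plan is to extract the stated convergence from the same construction that proves Proposition \ref{pro:E1}; essentially all of the hard work has already been done there, and the present corollary merely records the convergence of the auxiliary counting processes that appear along the way. I would proceed in three steps, using the functional strong law of large numbers (FSLLN) for renewal processes and for i.i.d.\ Bernoulli sequences, together with the continuous mapping theorem for composition. As the first step, the scaled exogenous arrival processes satisfy
\begin{align*}
\widehat{A}_{0,k}^{z}(t) \;=\; \frac{A_{0,k}^{z}(|z|t)}{|z|} \;\xrightarrow[|z|\to\infty]{\text{u.o.c.}}\; \frac{p_{0,k}}{a}\,t \;=:\; \overline{A}_{0,k}(t), \quad \text{a.s.},
\end{align*}
by the FSLLN for the delayed renewal process of external arrivals (the initial delay $R_0^z(0)/|z|\to \overline R_0(0)$ is absorbed by the spatial scaling) combined with the FSLLN for the independent routing of each external arrival to station $k$ with probability $p_{0,k}$.

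The second step, and the real work, is to identify the fluid limit $\overline J_k(t)$ of the scaled busy-time process $\widehat J_k^z(t) := J_k^z(|z|t)/|z|$. Since $J_k^z$ is non-decreasing and Lipschitz with constant $1$, the family $\{\widehat J_k^z\}$ is equi-Lipschitz, and hence relatively compact in the u.o.c.\ topology by Arzel\`a--Ascoli. Every cluster point $\overline J_k$ is forced to satisfy $d\overline J_k/dt \in [0,1]$, $d\overline J_k/dt = 1$ whenever $\overline Q_k>0$, and $\int_0^t \overline Q_k(s)\,d(s-\overline J_k(s))=0$; combined with \pro{E1} and the oblique-reflection / complementarity characterisation in the proof of Theorem 4.1 of \cite{dai1995positive}, these relations pin $\overline J_k$ down uniquely as a deterministic functional of $\overline Z$, and hence $\widehat J_k^z \to \overline J_k$ u.o.c.

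Given Step 2, the remaining convergences come from composition. Using \eq{Dk-t} and the FSLLN $S_k(|z|s)/|z| \to s/b_k$ u.o.c.\ in $s$, continuity of $\overline J_k$ and the continuous mapping theorem give
\begin{align*}
\widehat{D}_k^z(t) \;=\; \frac{S_k(|z|\widehat J_k^z(t))}{|z|} \;\xrightarrow[|z|\to\infty]{\text{u.o.c.}}\; \frac{\overline J_k(t)}{b_k} \;=:\; \overline{D}_k(t),
\end{align*}
and the FSLLN $\tfrac{1}{n}\sum_{i=1}^{n}\mathbf{1}(\alpha_{k,i}=j)\to p_{k,j}$ combined with $\widehat D_k^z\to \overline D_k$ u.o.c.\ yields
\begin{align*}
\widehat{D}_{k,j}^z(t) \;=\; \frac{1}{|z|}\sum_{i=1}^{D_k^z(|z|t)} \mathbf{1}(\alpha_{k,i}=j) \;\xrightarrow[|z|\to\infty]{\text{u.o.c.}}\; p_{k,j}\,\overline{D}_k(t) \;=:\; \overline{D}_{k,j}(t).
\end{align*}
Summing then gives $\widehat A_k^z \to \overline A_k := \overline A_{0,k} + \sum_{j=1}^{K} \overline D_{j,k}$ u.o.c.

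Uniqueness of $(\overline A_k, \overline D_k, \overline D_{k,j})$ is automatic from the construction: $\overline Z$ is uniquely determined by $\overline z$ thanks to \rem{E1}, and each of the limits above has been exhibited as an explicit deterministic functional of $\overline Z$. The step I expect to be the main obstacle is Step 2: one must promote the mere relative compactness of $\{\widehat J_k^z\}$ to genuine u.o.c.\ convergence by characterising the limit as a functional of $\overline Z$ via the complementarity problem with reflection matrix $(I-P)'$. This is exactly where the $\sr{M}$-matrix property of $(I-P)'$ enters (cf.\ \rem{E1}), and is the content of Dai's derivation in \cite{dai1995positive}, which I would cite rather than reproduce.
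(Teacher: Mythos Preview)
Your argument is correct and follows the same route as the paper; the only difference is granularity. The paper's proof is a one-line citation---``immediate from Lemma~4.2 of \cite{dai1995positive}, \eq{Dk-t} and \pro{E1}''---and your three steps are precisely the unpacking of those three ingredients: Step~1 and the FSLLN inputs in Step~3 are Lemma~4.2 of \cite{dai1995positive}; Step~2 (convergence of $\widehat J_k^{z}$) is contained in the proof of Theorem~4.1 of \cite{dai1995positive}, i.e.\ in \pro{E1} together with \rem{E1}; and the composition via $D_k(t)=S_k(J_k(t))$ is \eq{Dk-t}. In particular, the part you flag as ``the main obstacle'' is not additional work beyond \pro{E1}: Dai's Theorem~4.1 already establishes u.o.c.\ convergence of the scaled allocation processes as part of the fluid-limit package, so you may simply cite it rather than rerun the Arzel\`a--Ascoli/complementarity argument.
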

This corollary is immediate from Lemma 4.2 of \cite{dai1995positive}, \eq{Dk-t} and \pro{E1}. It is notable that this Lemma 4.2 was proved in \cite{dai1995positive} by the strong law of large numbers for the counting processes $A_{0,k}(t)$ and $S_{k}(t)$ for $k \in [K]$ under the station-based infrastructure $\Sigma_{S}$, and the spread out condition (1.5) from  \cite{dai1995positive} is not needed.

 Processes \eq{flimit-1} and \eq{flimit-2} have a number of nice properties that are presented below.

\begin{proposition}\label{propE2}
Let $k \in \{1,\ldots,K\}$. 
\begin{itemize}
\item{(p1)} 
\begin{itemize}
\item  If 
$\overline{R}_k(0)=0$, then  
$\overline{R}_k(t)=0$ for all $t>0$, 
\item  and 
if $\overline{R}_k(0)>0$, then $\overline{R}_k(t)$ decreases linearly with rate $-1$ until arriving to $0$ at time $t=\overline{R}_k(0)$ and stays at $0$ after that (i.e. $\overline{R}_k(u)=0$ for $u>t$). 
\item Further, 
there is no departure of fluid from station $k$ within time interval $[0, \overline{R}_k(0)]$, i.e.  
$\overline{D}_k(u)=0$ for all $u \in [0, \overline{R}_k(0)]$.
\end{itemize}

\item{(p2)}
We have  
\begin{align}\label{eq:AD}
\overline{A}_k(t)&=\overline{A}_{0, k}(t) + \sum_{j=1}^K \overline{D}_{j,k}(t),
\qquad
\overline{D}_k(t) = \sum_{j=1}^{K+1} \overline{D}_{k,j}(t),  \\ \overline{Q}_k(t) &= \overline{Q}_k(0)+
\overline{A}_k(t)-\overline{D}_k(t).
\end{align}

\item{(p3)}
The limiting processes \eq{flimit-1} and \eq{flimit-2} are continuous and piecewise
 linear, and there are finitely many time instants  where the linear functions change their directions (we may call these exceptional times as ``changing times/points''). In particular, the processes are differentiable everywhere except the changing times (recall that a point  where a process is differentiable is usually named a {\bf  regular point}).
Since the sample paths of those limiting processes are continuous and piecewise linear, they have right derivatives from the right everywhere, that coincide with derivatives at regular points. In what follows,we will use the term ``derivative" in the sense of derivative from the right.

\item{(p4)}
At any point of time $t \ge 0$, we may introduce the following derivatives.
\begin{align*}
\overline{a}_k(t) = (\overline{A}_k(t))',  \ 
\overline{d}_k(t) = (\overline{D}_k(t))', \ 
\overline{q}_k(t) = (\overline{Q}_k(t))', \
\overline{d}_{j,k}(t) = (\overline{D}_{j,k}(t))'.
\end{align*} 

Hence, by \eq{AD} and thanks to the SLLN,
\begin{align*}
\overline{d}_{j,k}(t) = \overline{d}_j(t) \cdot p_{j,k},  \ \text{and then} \
\overline{a}_k(t)=\overline{a}_{0, k}(t) + \sum_{j=1}^K \overline{d}_{j}(t) p_{j,k}. 
\end{align*}
Further, 
\begin{align*}
\overline{q}_k(t) = \overline{a}_k(t) - \overline{d}_k(t).
\end{align*}
 
\item{(p5)}
At any regular point $t>0$, if 
$\overline{R}_k(t)=0$ and $\overline{Q}_k(t)>0$, then
the fluid departs from station $k$ with the maximal rate, i.e. 
$\overline{d}_k(t)=1/b_k$ and then
 the derivatives $\overline{d}_{k,j}(t):= 
 (\overline{D}_{k,j}(t))' =p_{k,j}/b_k$. 

\item{(p6)}
At any regular point $t>0$, if $\overline{R}_k(t)=0$ and $\overline{Q}_k(t)=0$, then 
$\overline{q}_k(t)=0$, too. This means that the arrival and the departure rates coincide, 
$\overline{a}_k(t) =
\overline{d}_k(t)\le 1/b_k$. 
 
\item{(p7)}
Under the stability condition \eq{stab}, if $\overline{Q}_k(t)=0$ for some $k$ at a regular point $t>0$ and if $\max_{0\le k \le K} \overline{R}_k(0)\le t$, then $\overline{Q}_k(u)=0$, for all $u>t$. 

\item{(p8)}
Under the stability condition \eq{stab}, the time to empty the fluid network
\begin{align*}
\tau_{\overline{z}} = \inf \{t\ge 0 :  \ 
|\overline{Z}(t)|=0 \}
\end{align*}
is finite for any $\overline{z}$ and, moreover,
$\sup \tau_{\overline{z}}$ is finite, too, where the supremum is taken over all $\overline{z}$ with
$|\overline{z}|=1$.
\end{itemize}
\end{proposition}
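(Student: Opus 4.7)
The plan is to invoke the scaling convergence of \pro{E1} and \cor{E1} and then translate each of (p1)--(p8) into a statement that follows either (a) by passing pre-limit identities through the $u.o.c.$ limit, (b) by invoking the strong law of large numbers for the underlying renewal/routing inputs, or (c) by exploiting the piecewise-linear Skorokhod-type structure of the limit.

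First I would dispatch (p1), (p2), and (p4), which are essentially ``transcribed'' fluid versions of pre-limit facts. For (p1), in the pre-limit $R_k(t)$ decays with slope $-1$ between service completions and jumps by $\sigma_{k,i+1}$ at the $i$'th completion; under the scaling \eq{seq-2}, each single jump has magnitude $\sigma_{k,i+1}/|z|\to 0$ a.s., and at most one service can complete within a time of order $\overline R_k(0)|z|$, contributing $1/|z|$ to $\widehat D_k$. So the only surviving dynamics on $[0,\overline R_k(0)]$ is the linear decay, giving $\overline R_k(u)=\overline R_k(0)-u$ and $\overline D_k(u)=0$ there; the $\overline R_k(0)=0$ case is handled analogously, with emptiness being preserved because no customer is in service to accumulate residual. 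Property (p2) is obtained by scaling \eq{Ckt-2} and the definitions of $A_k$, $D_{k,j}$ and passing to the limit via \cor{E1}. For (p4), the derivative identities follow from (p2) once (p3) is in hand; the factorisation $\overline d_{j,k}(t)=\overline d_j(t)p_{j,k}$ comes from applying the SLLN to the i.i.d.\ routing sequence $\{\alpha_{k,i}\}$, which thins $D_j$ into $D_{j,k}$ with asymptotic fraction $p_{j,k}$.

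For (p3), I would use the Skorokhod/complementarity representation recalled in \rem{E1}. The ``free'' driving process $\overline X(t)$ (scaled exogenous arrivals, minus nominal services, plus nominal routing) is piecewise linear; reflection by the \sr{M}-matrix $(I-P)'$ preserves this structure and introduces only finitely many regime changes on each compact interval, by the standard Harrison--Reiman-type argument whose essential input is precisely the \sr{M}-matrix property.  Properties (p5) and (p6) are then local at a regular point: for (p5), $\overline Q_k(t)>0$ together with $\overline R_k(t)=0$ forces station $k$ to be busy throughout a fluid neighbourhood of $t$, so $J_k(|z|(t+\Delta))-J_k(|z|t)=|z|\Delta+o(|z|)$, and the SLLN for the service renewal process $S_k$ yields $\overline d_k(t)=1/b_k$, which combined with (p4) gives $\overline d_{k,j}(t)=p_{k,j}/b_k$. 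For (p6), $\overline R_k=\overline Q_k=0$ on a neighbourhood of $t$ forces $\overline q_k(t)=0$, hence $\overline a_k(t)=\overline d_k(t)$, and the constraint $\overline d_k(t)\le 1/b_k$ is inherited from the pre-limit capacity bound.

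The main work lies in (p7) and (p8), where the stability condition \eq{stab} enters decisively. For (p7), I would proceed by induction over the finitely many breakpoints of (p3): once every initial residual has expired and a coordinate $\overline Q_k$ hits zero, the only mechanism by which it could leave zero is an arrival drift $\overline a_k$ at that station exceeding $1/b_k$; but computing $\overline a_k$ via (p4) and the flow relations in terms of the throughputs $\{1/b_j\}_{j\notin \mathcal E}$ of non-empty stations and the empty-station rates from (p6) leads, through the matrix $(I-P)^{-1}$ (which has non-negative entries by the \sr{M}-matrix property), to a contradiction with the strict inequalities in \eq{stab}. For (p8), finiteness of $\tau_{\overline z}$ is contained in the analysis of Dai \cite{dai1995positive}; for the uniform bound, note that $|\overline Z(t)|$ is continuous and piecewise linear, and after the initial residuals have expired it has slope bounded above by some $-\delta<0$, where $\delta>0$ depends only on the gap in \eq{stab}, so $\tau_{\overline z}\le|\overline z|\max_k\overline R_k(0)/|\overline z|+|\overline z|/\delta$ is uniformly bounded on $\{|\overline z|=1\}$. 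Alternatively, one can compare to the maximal dater of the saturated group-$L$ fluid network via \textbf{MP3} and invoke continuity of $\tau_{\overline z}$ on the compact set $\{|\overline z|=1\}$.

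The hard part will be (p7) together with the strict-drift estimate feeding (p8): both require a careful accounting of ``who feeds whom'' through $(I-P)^{-1}$ to translate the local capacity inequality \eq{stab} into a global, state-independent negative drift on $|\overline Z(t)|$, and into the induction step that rules out an empty station being re-activated. Once this bookkeeping is in place, the remaining items are either immediate transcriptions of pre-limit facts or consequences of piecewise linearity.
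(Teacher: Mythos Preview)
The paper does not prove \pro{E2} at all: it is presented in Appendix~E as a list of ``known'' structural properties of fluid limits, explicitly introduced as facts ``taken or directly deduced from \cite{dai1995positive}'' with further references to \cite{Lela2005a,BaccFossLela2005,ChenYao2001}. There is no proof environment following the proposition; the next item is the ``Particular case'' subsection. So your proposal is not competing with a proof in the paper --- it is supplying one where the paper simply cites.

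Your sketches for (p1)--(p6) are sound and track the standard derivations (scaling pre-limit identities, SLLN for renewal and routing inputs, the Skorokhod/$\sr{M}$-matrix representation of \rem{E1}). For (p7) your outline is in the right direction but still loose: the claim is not merely that $\overline a_k$ cannot exceed $1/b_k$ at the instant $\overline Q_k$ hits zero, but that it can never do so again, and the induction you allude to needs to be set up carefully over the set of currently-empty stations (this is where the nonnegativity of $(I-P)^{-1}$ and the strict inequalities in \eq{stab} are genuinely used). For (p8), your first argument is fine once (p7) is established; the alternative via continuity of $\tau_{\overline z}$ on $\{|\overline z|=1\}$ would require an argument for continuity, which is not entirely free. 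If you want to stay close to the paper's spirit, you can simply cite Dai \cite{dai1995positive} (his Theorem~5.2 and its proof) for (p8) and the monotonicity underlying (p7), as the paper does implicitly.
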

 
\subsection{Particular case}\label{app:pc}

We are interested in the particular initial conditions for the Markov process $Z(t)$, that are: for a fixed $k \in [K]$, 
\begin{align}\label{eq:ini}
R_k(0)=x; \ \ R_j(0)\le h(x) \  \text{for} \  j\neq k; \ \ Q_k(0)\le h(x) \ \text{for all}  \ k.
\end{align}
Here $h(x)$ is again any nonnegative monotone function that increases to infinity slower than the linear function,  $h(x)=o(x)$.
Conditions \eq{ini} imply that $x \le |z|\le x+2Kh(x)$.
Further, for a fixed $k=1,\ldots,K$ and as $|z|\to\infty$, all these processes converge to the same fluid limit with
initial values
\begin{align}\label{eq:our-ini}
\overline{R}_k(0)=1; \ \ \overline{R}_j(0)= 0 \  \text{for} \  j\neq k; \ \ \overline{Q}_k(0)=0 \ \text{for all}  \ k.
\end{align}
Denote this fluid limit as 
\begin{align}\label{eq:zk}
\overline{Z}^{(k)}(t)= (\overline{R}_0^{(k)}(t), \ 
(\overline{R}^{(k)}_j(t), \overline{Q}^{(k)}_j(t)),\ j=1,\ldots,K)
\end{align}
and also supply with the upper $(k)$ in all other characteristics. 
\begin{proposition}
\label{propE4}
As it follows from the previous subsection, under the stability conditions \eq{stab}, the process \eq{zk} with initial values \eq{our-ini} has the following properties:
at any regular point $t\in [0,1)$,  
\begin{itemize}
\item[(p9)]
\begin{itemize}
\item
$\overline{R}^{(k)}_k(t)$ decreases linearly with rate $-1$; 
\item
$\overline{R}^{(k)}_0(t)=0$ and 
$(\overline{R}^{(k)}_j(t),\overline{Q}^{(k)}_j(t))=(0,0)$, for all $j=1,\ldots,K$, $j\neq k$.
\end{itemize}
\item[(p10)]
\begin{itemize}
\item
$\overline{d}_k^{(k)}(t)=0$; 
\item
for any 
$j\neq k$, the    
arrival $\overline{a}_j^{(k)}:=\overline{a}_j^{(k)}(t)$ and the departure 
$\overline{d}_j^{(k)}:=\overline{d}_j^{(k)}(t)$ rates of fluid to/from station $j$  are constants that must coincide, $\overline{a}_j^{(k)} =  \overline{d}_j^{(k)}$, and  the following equations hold:
\begin{align}\label{eq:a=d}
p_{0,j}/a + \sum_{\ell=1}^K p_{\ell,j}\overline{d}_{\ell}^{(k)} =
\overline{a}_j^{(k)} = \overline{d}_j^{(k)}, 
\end{align}
\item
Further, 
$\overline{Q}^{(k)}_k(t)$ increases linearly with rate  
\begin{align}\label{eq:ratek}
\overline{a}_k^{(k)} = p_{0,k}/a + \sum_{1\le j \le K, j\neq k}
p_{j,k}\overline{d}^{(k)}_j=\frac{\beta_k}{a}
\end{align}
where, recall,
$\beta_k={\mathbb P} (N_k>0)>0$.  Indeed, the second equality above is true since $\beta_k$ is the probability that an arriving customer ever visits station $k$ and since, at the fluid level, any arriving customer immediately either arrives to station $k$ or leaves the network. Thus, the fluid level at server $k$ at time $1$ is $\beta_k/a.$
\end{itemize}
\end{itemize}
Thus, at time $t=1$ all coordinates of the vector 
$\overline{Z}^{(k)}(1)$ are equal to $0$ except 
$\overline{Q}^{(k)}_k(1)=\beta_k/a$.
\end{proposition}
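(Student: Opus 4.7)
The plan is to read off both (p9) and (p10) from items (p1), (p4), (p6) of Proposition~\ref{propE2}, using uniqueness of the fluid limit (\rem{E1}) to make a ``guess-and-verify'' argument work, and then to identify $\overline a_k^{(k)}$ probabilistically as $\beta_k/a$.

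First, since $\overline R_k(0) = 1 > 0$, the first and third bullets of (p1) give immediately that $\overline R_k^{(k)}(t) = 1-t$ on $[0,1]$ and $\overline d_k^{(k)}(t) = 0$ on $[0,1)$, which settles the first bullet of (p9) and the first bullet of (p10). Since $\overline R_0(0) = 0$, the exogenous counting process runs at its asymptotic rate $1/a$ throughout, so $\overline R_0^{(k)}(t) \equiv 0$ and the exogenous arrival rate to each station $j$ is the constant $p_{0,j}/a$.

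Next I would guess that every station $j \neq k$ stays empty on $[0,1)$, that is, $\overline R_j^{(k)}(t) = \overline Q_j^{(k)}(t) = 0$. Under this guess, (p6) forces $\overline a_j^{(k)} = \overline d_j^{(k)}$ for $j \neq k$, and together with $\overline d_k^{(k)} = 0$, the splitting identities in (p4) reduce to exactly the linear system \eq{a=d}. Its matrix $(I - P')$ restricted to $[K]\setminus\{k\}$ is an $\sr{M}$-matrix, hence the system has a unique solution, and this solution has a clean probabilistic reading: $a\,\overline d_j^{(k)}$ equals the expected number of visits to station $j$ by the auxiliary chain $\{Y_n\}$ after turning $k$ into an absorbing state. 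To close the guess, I need to check $\overline a_j^{(k)} \le 1/b_j$, as (p6) requires for consistency; this is implied by \eq{stab}, since $a\,\overline a_j^{(k)}$ cannot exceed $\mathbb E N_j$ and $b_j \mathbb E N_j < a$. Uniqueness of the fluid limit then promotes this candidate into the actual trajectory $\overline Z^{(k)}(\cdot)$, giving (p9) and the first part of (p10).

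Finally, plugging the $\overline d_\ell^{(k)}$ into (p4) for station $k$ yields $a\,\overline a_k^{(k)} = p_{0,k} + \sum_{\ell \neq k} p_{\ell,k}\, a\overline d_\ell^{(k)}$, which by the same probabilistic reading is the expected number of entries into $k$ of the chain killed on entry to $k$. Because such a trajectory visits $k$ at most once, this number equals $\mathbb P(N_k > 0) = \beta_k$, so $\overline a_k^{(k)} = \beta_k/a$. Combined with $\overline d_k^{(k)} = 0$, the rate $\overline q_k^{(k)}$ is the constant $\beta_k/a$ on $[0,1)$, and integration gives $\overline Q_k^{(k)}(1) = \beta_k/a$. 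The main obstacle I anticipate is the rigorous verification that the queues at $j \neq k$ genuinely stay empty: the strict inequality $\overline a_j^{(k)} < 1/b_j$ delivered by the stability condition is precisely what makes the candidate compatible with the complementarity structure underlying \rem{E1}, and that complementarity, combined with uniqueness, is what rules out alternative trajectories that temporarily build a queue at some $j \neq k$.
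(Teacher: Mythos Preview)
Your proposal is correct and follows the paper's approach: the paper treats Proposition~\ref{propE4} as a direct consequence of properties (p1)--(p8) in Proposition~\ref{propE2} (it gives no separate proof, only the inline ``instantaneous passage'' justification for \eq{ratek}), and you derive each claim from precisely those properties. Your guess-and-verify step for $\overline Q_j^{(k)}\equiv 0$ ($j\ne k$), together with the consistency check $\overline a_j^{(k)}\le 1/b_j$ via \eq{stab} and the appeal to uniqueness from \rem{E1}, makes explicit what the paper leaves to the reader.
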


Under the stability condition \eq{stab}, it is well known that $\ol{Q}(t)$ vanishes in a finite time (e.g., see the proof of Theorem 5.1 of \cite{dai1995positive}). Furthermore, it is not hard to see that, by the monotonicity properties, the
departure rate from queue $k$ cannot increase when it empties. Based on these observations, we have the following proposition.

\begin{proposition}\label{propE3}
Consider again the fluid process \eq{zk} with initial values \eq{our-ini} and 
assume the stability condition \eq{stab} to hold.  
\begin{itemize}
\item {(p11)} 
Based on properties (p1) and (p7),
there exist exactly $K$ ``changing"  times  $\theta_{Q,j}\ge 1$, $j=1,\ldots,K$ such that    $\overline{Q}_j(t)>0$ for all $t \in (1, \theta_{Q,j})$ and $\overline{Q}_j(t) = 0$ for all $t \ge \theta_{Q,j}$. 
\item{(p12)}
Note that some of the $\theta_{Q,j}$'s may coincide. Assume that there are $K'+1$ distinct numbers among them, and order them as follows
\begin{align*}
\theta^{(0)}\equiv 1 < \theta^{(1)} < \theta^{(2)} < \ldots < 
\theta^{(K')}.
\end{align*}
We may  conclude that, for $m=0,1,\ldots, K'$,
within any open time interval $(\theta^{(m)},\theta^{(m+1)})$, the number $M_m := |\mathcal{A}^{(m)}|$, where $\mathcal{A}^{(m)} = \{j: \ \overline{Q}_j(t)=0, \forall t\in (\theta^{(m)},\theta^{(m+1)})\}$,
of coordinates with zero fluid level does not change and, therefore, the arrival and the departure rates at any server are constants.  Further, the numbers $M_m$ are
increasing in $m$. 
Then, it follows from (p4), (p5) and (p6) that the arrival  and the departure rates at any station are non-increasing  in $m=0,\ldots,K'$.
\item{(p13)} 	For any time interval $(\theta^{(m)}, \theta^{(m+1)})$, the arrival and the departure rates at any server $j$ (denote them $\overline{a}_j^{(k,m)}$ and $\overline{d}_j^{(k,m)}$) are {uniquely determined (see Remark \ref{rem:E1}) by
\begin{itemize} 
\item the uniqueness of 
the set  ${\mathcal A}^{(m)}$  
\item and the uniqueness of solutions in  the following system of linear equations:
\begin{align}\label{eq:spirit}\begin{split}
\text{for} \ j\in \mathcal{{A}}^{(m)}, \ \ 
&p_{0,j}/a + \sum_{\ell=1}^K p_{\ell,j}\overline{d}_{\ell}^{(k,m)} =
\overline{a}_j^{(k,m)} = \overline{d}_j^{(k,m)}, \\
\text{for} \ j\in [K]\setminus\mathcal{{A}}^{(m)}, \ \ 
&
p_{0,j}/a + \sum_{\ell=1}^K p_{\ell,j}\overline{d}_{\ell}^{(k,m)} =
\overline{a}_j^{(k,m)}  \ \text{and} \ \ 
\overline{d}_j^{(k,m)} = 1/b_j. 
\end{split}\end{align}
\end{itemize}}
 \end{itemize}
 \end{proposition}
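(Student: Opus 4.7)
The plan is to deduce all three parts (p11), (p12), (p13) from the properties (p1)--(p10) in Propositions~\ref{propE2} and \ref{propE4}, together with the uniqueness of the fluid limit from Proposition~\ref{pro:E1} and Remark~\ref{rem:E1}.

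For (p11), I would begin with the state at $t=1$ given by (p9): $\overline{R}_j(t)=0$ for all $j$ and $t\ge 1$, $\overline{Q}_k(1)=\beta_k/a>0$, and $\overline{Q}_j(1)=0$ for $j\ne k$. By (p8) the model empties in finite time, so $\{t\ge 1 : \overline{Q}_j(t)=0\}$ is nonempty and, by the piecewise linearity in (p3), has a minimum $\theta_{Q,j}$. Since $\overline{R}_j(1)=0$ for all $j$, the hypothesis of (p7) holds at every $t\ge 1$, so (p7) forces $\overline{Q}_j(u)=0$ for all $u>\theta_{Q,j}$, and continuity then gives $\overline{Q}_j(t)=0$ for every $t\ge \theta_{Q,j}$. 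For $j=k$, continuity and $\overline{Q}_k(1)>0$ imply $\theta_{Q,k}>1$; for $j\ne k$ either $\overline{Q}_j$ becomes positive somewhere after time $1$ (in which case positivity throughout $(1,\theta_{Q,j})$ follows from (p7) applied to any intermediate zero), or $\theta_{Q,j}=1$ and the interval $(1,\theta_{Q,j})$ is empty.

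For (p12) and (p13), fix any open interval $(\theta^{(m)},\theta^{(m+1)})$. By the definition of the $\theta_{Q,j}$'s, the set $\mathcal{A}^{(m)} = \{j : \overline{Q}_j(t)=0 \text{ on the interval}\}$ is a single constant subset of $[K]$; since $\overline{R}_j(t)=0$ throughout, property (p6) yields $\overline{d}_j^{(k,m)} = \overline{a}_j^{(k,m)}$ for $j\in \mathcal{A}^{(m)}$, while (p5) yields $\overline{d}_j^{(k,m)} = 1/b_j$ for $j\notin \mathcal{A}^{(m)}$. Substituting into the balance relation $\overline{a}_j = p_{0,j}/a + \sum_\ell p_{\ell,j}\overline{d}_\ell$ from (p4) produces exactly the system \eq{spirit}. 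Its unique solvability comes from the $\sr{M}$-matrix structure highlighted in Remark~\ref{rem:E1}: the relevant principal submatrix of $I-P'$ is itself an $\sr{M}$-matrix. The uniqueness of $\mathcal{A}^{(m)}$ is inherited from the uniqueness of the fluid limit in Proposition~\ref{pro:E1}. This proves (p13), and since the rates are thus fully determined by $\mathcal{A}^{(m)}$, they are constant on the interval, which is the first assertion of (p12).

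It remains to verify the monotonicity assertions of (p12). The inclusion $\mathcal{A}^{(m)}\subsetneq \mathcal{A}^{(m+1)}$ is immediate from (p7): once a queue empties after $t=1$ it stays empty. The main obstacle is showing the rates are non-increasing in $m$. The idea is that when $J^\star := \mathcal{A}^{(m+1)}\setminus \mathcal{A}^{(m)}$ is added, for each $j\in J^\star$ the constraint switches from $\overline{d}_j = 1/b_j$ to $\overline{d}_j = \overline{a}_j$; since queue $j$ was depleting just before $\theta^{(m+1)}$ we have $\overline{a}_j^{(k,m)}\le \overline{d}_j^{(k,m)} = 1/b_j$, so the value of $\overline{d}_j$ cannot increase across the transition. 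The non-negativity of $(I-P')^{-1}$ (an $\sr{M}$-matrix consequence) then propagates this non-increase through the network, so that no $\overline{a}_\ell$ or $\overline{d}_\ell$ can strictly grow from one interval to the next, completing (p12).
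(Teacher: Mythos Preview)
Your proposal is correct and matches the paper's approach: the paper gives no separate proof, only the sentence preceding the proposition (finite-time emptying from \cite{dai1995positive} and ``by the monotonicity properties, the departure rate from queue $k$ cannot increase when it empties'') together with the cross-references to (p1)--(p8) and Remark~\ref{rem:E1} embedded in the statement itself, and you expand on exactly these hints.

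One small wrinkle in your (p11): with $\theta_{Q,j}:=\min\{t\ge 1:\overline{Q}_j(t)=0\}$ you get $\theta_{Q,j}=1$ for every $j\ne k$ (since $\overline{Q}_j(1)=0$), but $t=1$ is a changing point, not a regular one, so (p7) cannot be invoked there; and indeed some $\overline{Q}_j$ with $j\ne k$ do become positive immediately after time $1$ (this is exactly what happens in Interval~1 of Section~\ref{sec:algo} and in case~2 of the two-server example). Your final sentence shows you see the two cases, but your definition does not distinguish them. Taking instead $\theta_{Q,j}:=\sup\{t\ge 1:\overline{Q}_j(t)>0\}$ (with $\sup\emptyset:=1$), piecewise linearity gives a regular zero of $\overline{Q}_j$ just to the right of $\theta_{Q,j}$, and then your use of (p7) goes through verbatim.
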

\bigskip 

Finally, we show how to obtain $\tau^{(k)},$ the time needed to empty the network. For $t\ge 0$, denote by 
\begin{align*}
\overline{Z}^{(k,1)}(t) := \frac{a}{\beta_k} \overline{Z}^{(k)}(t+1)
\end{align*}
the auxiliary fluid limit with initial values
\begin{align}\label{eq:our-ini2}
\overline{Q}_k(0)=1; \ \ \overline{Q}_j(0)= 0 \  \text{for} \ j=1,\ldots, K, \  j\neq k; \ \ \overline{R}_k(0)=0 \ \text{for}  \ k=0,1,\ldots,K.
\end{align}
Assume the stability assumption \eq{stab} to hold and let $\tau^{(k,1)}$ be the time to empty this fluid limit,
\begin{align*}
\tau^{(k,1)} = \inf \{t>0: \ |\overline{Z}^{(k,1)}(t)|=0\},
\end{align*}
Then the standard scaling argument implies that the time to empty the fluid limit $\tau^{(k)}$
is nothing else than
\begin{align}\label{eq:tauk}
\tau^{(k)}= 1+ \frac{\beta_k}{a} \cdot \tau^{(k,1)}.
\end{align}

To find $\tau^{(k,1)}$, we may use the property 
(p13) above.

\newpage


\end{document}